\newtheorem{theorem}{Theorem}[section]
\newtheorem{lemma}[theorem]{Lemma}
\newtheorem{example}[theorem]{Example}
\newtheorem{corollary}[theorem]{Corollary}
\newtheorem{definition}[theorem]{Definition}
\newtheorem{proposition}[theorem]{Proposition}
\newtheorem{remark}[theorem]{Remark}
\numberwithin{equation}{section} 
\newcommand{\norm}[1]{\left\|#1\right\|}
\newcommand{\abs}[1]{\left|#1\right|}
\newcommand{\T}{\ensuremath{\mathbb{T}}}
\newcommand*{\R}{\ensuremath{\mathbb{R}}}
\newcommand{\eps}{\varepsilon}
\newcommand{\e}{\varepsilon}
\newcommand{\rmd}{{\rm d}}
\renewcommand{\MR}[1]{} 
\def\div{\mathop{\rm div}\nolimits}    
\def\dim{\mathop{\rm dim}\nolimits}
\def\spt{\mathop{\rm Spt}\nolimits} 
\def\Lip{\mathop{\rm Lip}\nolimits}
\newcommand{\be}{\begin{equation}}
\newcommand{\ee}{\end{equation}}
\title{On the Support of Anomalous Dissipation Measures}
\author{Luigi De Rosa}
\address{Department Mathematik Und Informatik, Universit\"at Basel, CH-4051 Basel, Switzerland}
\email{luigi.derosa@unibas.ch}
\author{Theodore D. Drivas}
\address{Department of Mathematics, Stony Brook University, Stony Brook, NY, 11794, USA}
\email{tdrivas@math.stonybrook.edu}
\author{Marco Inversi}
\address{Department Mathematik Und Informatik, Universit\"at Basel, CH-4051 Basel, Switzerland}
\email{marco.inversi@unibas.ch}
\date{\today}
\begin{document}

\begin{abstract}
By means of a unifying measure-theoretic approach, we establish lower bounds on the Hausdorff dimension of the space-time set which can support anomalous dissipation for weak solutions of fluid equations, both in the presence or absence of a physical boundary. Boundary dissipation, which can occur at both the time and the spatial boundary, is analyzed by suitably modifying the Duchon \& Robert interior distributional approach. One implication of our results is that any bounded Euler solution (compressible or incompressible) arising as a zero viscosity limit of Navier--Stokes solutions cannot have anomalous dissipation supported on a set of dimension smaller than that of the space.  This result is sharp, as demonstrated by entropy-producing shock solutions of compressible Euler \cites{Majda,DE18} and by recent constructions of dissipative incompressible Euler solutions \cites{brue2022,brue2022onsager}, as well as passive scalars \cites{drivas2022anomalous,crippa}. For $L^q_tL^r_x$ suitable Leray--Hopf solutions of the $d-$dimensional Navier--Stokes equation we prove a bound of the dissipation in terms of the Parabolic Hausdorff measure $\mathcal{P}^{s}$, which gives $s=d-2$ as soon as the solution lies in the Prodi--Serrin class. In the three-dimensional case, this matches with the Caffarelli--Kohn--Nirenberg partial regularity. 
\end{abstract}
\maketitle
\par
\noindent
\textbf{Keywords:} Incompressible fluids, conservation laws, dissipation, dimension lower bounds.
\par
\medskip\noindent
{\textbf{MSC (2020):} 	35Q31 - 35L65 - 76D05 - 76F02.
\par
}

\section{Introduction}

Let $\Omega\subset \mathbb{R}^d$, $d\geq 2$, be a domain, possibly with smooth boundary $\partial \Omega$. If not otherwise specified, our spatial domain can be $\Omega=\R^d$ the whole space, $\Omega=\T^d$ the flat torus, or any general bounded domain $\Omega\subset \R^d$. Incompressible fluid motion is, to good approximation, described by the Navier--Stokes equations with no-slip boundary conditions for  a velocity field $u^\nu: \Omega \times (0,T)\to \mathbb{R}^d$
\begin{equation}\label{NS}
\left\{\begin{array}{l}
\partial_t u^\nu + (u^\nu \cdot \nabla)\, u^\nu = -\nabla p^\nu+ \nu \Delta u^\nu\qquad  \text{in}\  \Omega\times (0,T),\\
\div u^\nu=0\qquad \qquad\qquad\qquad\qquad\quad \,\,\,\, \ \ \  \  \text{in}\  \Omega\times (0,T),\\
u^\nu=0\qquad \qquad\qquad\qquad\qquad\qquad\quad \ \ \ \  \text{on}\  \partial \Omega\times (0,T).
\end{array}\right.
\end{equation}
The parameter $\nu>0$ represents the kinematic viscosity, which can be identified (upon non-dimensionalization with suitable characteristic scales $\mathsf{U}$ and $\mathsf{L}$) with the inverse Reynolds number $\mathsf{Re}^{-1}:= {\nu}/{\mathsf{UL}}$.  Turbulence intensifies as Reynolds number increases, with the limit $\mathsf{Re}\to \infty$  (equivalently zero viscosity) representing a regime of ``ideal" turbulence  in which the velocity field formally satisfies the \emph{inviscid} Euler equations, at least in a weak sense
\begin{equation}\label{E}
\left\{\begin{array}{l}
\partial_t u + (u \cdot \nabla)\, u = -\nabla p \qquad \quad \qquad \text{in}\  \Omega\times (0,T),\\
\div u=0\qquad \qquad\qquad\qquad\quad \,\,\, \ \ \ \   \text{in}\  \Omega\times (0,T),\\
u\cdot n=0 \qquad\,\,\,\qquad\qquad\qquad\quad \ \ \ \  \text{on}\  \partial \Omega\times (0,T),
\end{array}\right.
\end{equation}
where $n:\partial \Omega\rightarrow \R^d$ is the outward unit normal.
For strong solutions of the Euler equation, there is no mechanism for dissipation of energy.
Remarkably, observations from experiments  \cites{KRS84,PKW02} and simulations \cites{KRS98,KIYIU03,Farge} (for further discussion, see \cite{DE19} for the case without boundary and \cite{DN18} for the case with boundary) indicate that energy dissipation remains non-vanishing in ideal turbulence:
 \be\label{zerothLaw3d}
  \lim_{\nu\to 0}  \nu\langle |\nabla u^\nu|^2\rangle = \varepsilon>0,
 \ee
 where  $\langle \cdot \rangle$ is some relevant averaging procedure, space, time or ensemble. This phenomenon of  \emph{anomalous dissipation} \eqref{zerothLaw3d} is so central to our modern understanding of ideal turbulence that it has been termed the ``zeroth law". It is the principal postulate of the celebrated Kolmogorov 1941 (K41) theory \cite{K41} and reflects a turbulent cascade transferring energy from large to small scales which can effectively dissipate energy without the direct need of viscosity \cites{O49,Taylor17}. Under a space-time $L^3_{{\rm loc}}$ compactness hypotheses  on the family $\{u^\nu\}_{\nu>0}$ (implied by uniform regularity related to experimentally supported bounds on structure functions in the inertial range \cite{DN19}), the phenomenon \eqref{zerothLaw3d} indicates that ideal turbulence $u^\nu \to u$ is described by \emph{dissipative} weak Euler solutions $u$ \cite{DR00}, that is weak solutions for which the distribution
\be\label{limitmeasure}
\varepsilon[u] :=-\partial_t \left(\frac{|u|^2}{2}  \right) - \div  \left(\left(\frac{|u|^2}{2}+p\right)u\right),
\ee
which would be zero on classical solutions, is (locally) a non-trivial, non-negative space-time \emph{measure} $\varepsilon [u] \in \mathcal{M}_{{\rm loc}}(\Omega\times (0,T))$. In view of \eqref{zerothLaw3d}, $\langle\varepsilon[u] \rangle  =\varepsilon>0$.

A consequence of Kolmogorov's 1941 theory is that $\varepsilon[u]$ is supported on a full measure set of space-time, in keeping with prediction of a  ``monofractal" (having the same roughness everywhere) velocity field possessing, in some sense, exactly one-third of a derivative:
$$
|u(x+\ell z)-u(x)|\sim (\varepsilon \ell)^{1/3}, \quad |z|=1.
$$
However, as Landau famously remarked during Kolmogorov's Kazan lectures in 1942 (discussed subsequently in  section 34  of Landau \& Lifshitz \cite{LL}), the prediction of  a monofractal  velocity field is dubious precisely because of the space-time spottiness of the energy dissipation measure $\varepsilon[u]$. See also the account of Frisch \cite{Frisch91}.  That the velocity field is intermittent or multifractal (having fluctuating roughness in space and time) in high-Reynolds number turbulence is, by now, a well known experimental fact (see e.g. \cite{ISY20}). Moreover, simple singular solutions of model problems (such as shocks for the one-dimensional Burgers equation) clearly connect intermittency in the velocity field with anomalous dissipation supported on lower-dimensional sets.  A  ``multifractal model" based on energy dissipation was first proposed by Mandelbrot \cite{Mandelbrot} (see \cites{Meneveau88,Meneveau91} for experimental support), and Kolmogorov himself proposed to use a refined self-similarity of the dissipation to make multifractal predictions \cite{K62}. See Frisch \cite{Frisch95} and Eyink \cite{Enotes} for a survey and \cites{Eyink95,DRH23,CS1,CS2} for some mathematical results on multifractality.
A precise, rigorous connection between the fractal dimension of the support of anomalous dissipation  and intermittent scaling for incompressible Euler has recently been established \cites{Is17,DRI22}.

Consequently, an understanding of spatio-temporal variation of the energy dissipation measure \eqref{limitmeasure} is crucially important.  The first (and nearly only) study of this subject can be found in the pioneering series of works by Meneveau and Sreenivasan
\cites{Meneveau87,Meneveau88,Meneveau91}.  These papers suggest from experiments that, in the infinite Reynolds number limit, the anomalous energy dissipation measure at fixed time is concentrated on a subset of fractal dimension less than space dimension 3, about 2.87, and has volume zero \cite{Meneveau87}. Moreover, based on the data, it is supposed that this fractal dimension is (roughly) constant in time, making the inferred space-time support of the dissipation measure to be dimension 3.87.

In this paper, we are interested in establishing apriori \emph{lower bounds} for the dimension of the space-time support of anomalous dissipation based on integrability assumptions on the weak solutions.  We lay out a framework for proving lower bounds for weak solutions of general systems of conservation laws (including incompressible and compressible Euler, passive scalars, and Leray--Hopf solutions of Navier--Stokes) under integrability assumptions. Although some of our results can be stated under a single general theorem, for exposition purposes we divide the results in three categories: incompressible Euler, general conservation laws, incompressible Navier--Stokes.  Moreover,  for exposition purposes we limit ourselves to the case of positive dissipations but we remark that some of the results extends to the more general signed case.  Details are given in \cref{s:signed_case}.

\subsection{Incompressible Euler}
Before stating our main results, we clarify the subtle difference between interior and boundary dissipation. In the celebrated work of Duchon \& Robert \cite{DR00}, only interior dissipation is studied, i.e \eqref{limitmeasure} can be tested with $\varphi\in C^1_c(\Omega\times (0,T))$, giving
\begin{equation}\label{DR_measure_inter}
\int_0^T\int_\Omega  \left[ \frac{|u|^2}{2}\partial_t\varphi  + \left(\frac{|u|^2}{2}+p\right)u\cdot \nabla \varphi \right] \,\rmd x\, \rmd t= \int_0^T\int_\Omega \varphi \, \rmd \eps[u]
\end{equation}
as soon as $\eps[u]$ is a positive distribution, and thus it is locally a finite positive measure \cite{EG15}*{Theorem 1.39}. This is the case whenever $u$ arises as an $L^3_{{\rm loc}}(\Omega\times (0,T))$ limit of smooth, or more generally \emph{dissipative} in the sense of \cite{DR00}, solutions to incompressible Navier--Stokes. When $\Omega$ is either $\R^d$ or $\T^d$, standard Calderon--Zygmund estimates guarantee that $p\in L^\frac{3}{2}_{{\rm loc}}(\Omega\times (0,T))$, which then makes the term $pu$ well defined in \eqref{DR_measure_inter}. On a general bounded domain, the lack of a boundary condition on the pressure makes the issue delicate.

This approach does not trivially extend to analyze dissipation happening at the boundary (in time or space) since  it relies on standard regularization techniques. However, in view of formation of intense boundary layers at the physical boundary $\partial \Omega$  \cite{Farge}, and also of the recent constructions \cites{drivas2022anomalous,brue2022,brue2022onsager} for which all dissipation happens at the single instant $T$, we give the following definition.

\begin{definition}[Boundary extendable dissipation]    \label{d:extendable_diss}
Let $\Omega\subset \R^d$ be a bounded domain and let $u\in L^3(\Omega\times (0,T))$, $p\in L^\frac32 (\Omega\times (0,T))$ be a solution of the incompressible Euler system \eqref{E}. We say that $u$ has a positive boundary extendable dissipation if there exists a positive measure $\overline{\eps}[u]\in \mathcal{M}_{{\rm loc}}(\overline \Omega \times (0,T])$  such that 
\begin{equation}\label{DR_measure_boundary}
\int_0^T\int_\Omega  \left[ \frac{|u|^2}{2}\partial_t\varphi  + \left(\frac{|u|^2}{2}+p\right)u\cdot \nabla \varphi \right] \,\rmd x\, \rmd t=\int_0^T\int_{\overline \Omega} \varphi \,\rmd \overline \eps[u],\quad \forall \varphi\in C^1_c(\overline \Omega\times (0,T]).
\end{equation}
\end{definition}

In \cref{p:boundary dissipation for vanishing visc}, we show that if $\{(u^\nu,p^\nu)\}_{\nu>0}$ is a sequence of smooth Navier--Stokes solutions such that $u^\nu\rightarrow u$ in $L^3(\Omega\times (0,T))$ and $p^\nu \rightharpoonup p$ weakly in $L^\frac32(\Omega\times (0,T))$, then $(u,p)$ is a weak solution to the incompressible Euler system with boundary extendable dissipation. We will use the definition above for $(u,p)$ satisfying general integrability conditions, i.e. $u\in L^q(0,T;L^r(\Omega))$ and $p\in L^\frac{q}{2}(0,T;L^\frac{r}{2}(\Omega))$, with $q,r\geq 3$. These latter conditions can also be restored by vanishing viscosity sequences with an appropriate uniform (in viscosity) bound, see \cref{r:unif_bound}.  

We are interested in establishing lower bounds on the dimension of the space-time support of the measures $  \eps[u]$ and $ \overline \eps[u]$.  In fact, we can establish lower bounds on the \emph{concentration set}, which is in general much smaller than the support. Recall that a measure $\mu$ is said to be concentrated on a set $S$ if $\mu(A)=\mu(A\cap S)$ for all measurable $A$, while $\spt \mu$ is the complement of the union of all open sets $O$ such that $\int \varphi \,d\mu=0$, for all $\varphi\in C^0_c (O)$. Since any measure is concentrated on its support,  we can restrict to the case $S\subset \spt \mu$,  while in general there holds $ \spt \mu\subset \overline S$. In particular,  any lower bound on the dimension of $S$, translates into a dimension lower bound for $\spt \mu$.  As soon as the concentration set is dense, then $\spt \mu$ is the whole ambient space.  Since measures can concentrate on countable dense sets, then dimensional lower bounds on $S$ are clearly more useful than those on $\spt \mu$.

We denote by $\mathcal{H}^s_\alpha$, for $\alpha>0$, the generalised space-time Hausdorff measure (see \cref{s:tools} for precise definition), which imposes an $\alpha$ scaling in the time variable allowing for anisotropy of measurement in space and time. We remark that $\mathcal{H}^s_\alpha$ coincides with the usual Hausdorff measure $\mathcal{H}^s$ for $\alpha=1$ and with the parabolic Hausdorff measure $\mathcal{P}^s$ for $\alpha =2$. In the following result, we choose $\alpha$ in order to optimize the dimension bound. Indeed, the Euler equations do not impose any time-space scaling (as contrary, for instance, to Navier--Stokes, which imposes parabolic scaling). The choice of $\alpha$ in the theorem below coincides with the one which makes the corresponding $L^q_tL^r_x$ norm scaling invariant, see \cref{r:scaling_euler}.

\begin{theorem}[Incompressible Euler]\label{t:main_euler}
Let $\Omega\subset \R^d$, $d\geq 2$, be a domain and let $3\leq r,q\leq \infty$ be such that
$$ s = d \frac{r-2}{r} -\frac{2}{q-1}\frac{r+d}{r}\geq 0,\ \ \ \alpha = \frac{q}{q-1}\frac{r+d}{r}>0,$$
with $s=d$ and $\alpha=1$  whenever $r,q=\infty$. Let $u\in L^q_{{\rm loc}}(0,T;L^r_{{\rm loc}}(\Omega))$ and $p\in L^\frac{q}{2}_{{\rm loc}}(0,T;L^\frac{r}{2}_{{\rm loc}}(\Omega))$ be a weak solution of the incompressible Euler equations \eqref{E} having non-negative local anomalous dissipation measure $\varepsilon[u]\in \mathcal{M}_{{\rm loc}}(\Omega\times (0,T)) $. Then $\eps[u]\ll \mathcal{H}^{s}_{\alpha}$ and, if $\eps[u]$ is non-trivial and concentrated on a space-time set $S$,  we have 
\be\label{lowerbound}
\dim_{\mathcal{H}_\alpha}S\geq s.
\ee
Further, when $\Omega$ is bounded and if $(u,p)\in L^q(0,T; L^{r}(\Omega))\times L^{\frac{q}{2}}(0,T; L^{\frac{r}{2}}(\Omega))$  has a boundary extendable dissipation $\overline \eps[u]\in \mathcal{M}_{{\rm loc}}(\overline \Omega\times (0,T])$ in the sense of \cref{d:extendable_diss}, then the same conclusions hold for $\overline \eps[u]$.
\end{theorem}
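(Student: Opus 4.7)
My strategy is to reduce the absolute continuity $\varepsilon[u]\ll \mathcal{H}^s_\alpha$ to a Frostman-type bound on anisotropic cylinders: for every compact $K\subset\Omega\times(0,T)$ there exists $C_K$ such that every
$$Q_r^\alpha(x_0,t_0):=B_r(x_0)\times(t_0-r^\alpha/2,\,t_0+r^\alpha/2)\subset K$$
satisfies $\varepsilon[u](Q_r^\alpha(x_0,t_0))\le C_K\, r^s$. Once this holds, any Borel set $E\subset K^\circ$ with $\mathcal{H}^s_\alpha(E)=0$ admits covers by anisotropic cylinders $\{Q_{r_i}^\alpha\}\subset K$ with $\sum_i r_i^s$ arbitrarily small, so $\sigma$-subadditivity yields $\varepsilon[u](E)=0$; exhausting $\Omega\times(0,T)$ by compacts shows $\varepsilon[u]\ll\mathcal{H}^s_\alpha$ globally, and \eqref{lowerbound} then follows: a concentration set $S$ of dimension less than $s$ would satisfy $\mathcal{H}^s_\alpha(S)=0$, contradicting nontriviality of $\varepsilon[u]$.

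\textbf{The cylinder bound.} To prove it, I would test the weak formulation \eqref{DR_measure_inter} against a cutoff $\varphi\in C^\infty_c(Q_r^\alpha)$ with $0\le\varphi\le 1$, $\varphi\equiv 1$ on $Q_{r/2}^\alpha$, $\|\partial_t\varphi\|_\infty\lesssim r^{-\alpha}$ and $\|\nabla_x\varphi\|_\infty\lesssim r^{-1}$. Since $\varepsilon[u]\geq 0$,
$$\varepsilon[u](Q_{r/2}^\alpha)\;\le\;\int\varphi\,d\varepsilon[u]\;=\;\int\tfrac{|u|^2}{2}\partial_t\varphi\,\rmd x\,\rmd t+\int\bigl(\tfrac{|u|^2}{2}+p\bigr)u\cdot\nabla\varphi\,\rmd x\,\rmd t.$$
Applying H\"older with the exponent pair $((q/2)',(r/2)')$ to the time-derivative term and $((q/3)',(r/3)')$ to the two convective terms (both well-defined since $q,r\ge 3$), and tracking the $r$-dependence coming from the support and the sup-norm of the derivatives of $\varphi$, one obtains
\begin{align*}
\left|\int\tfrac{|u|^2}{2}\partial_t\varphi\right|&\lesssim\|u\|^2_{L^q_tL^r_x(Q)}\, r^{-\frac{2\alpha}{q}+d\frac{r-2}{r}},\\
\left|\int\tfrac{|u|^2}{2}u\cdot\nabla\varphi\right|&\lesssim\|u\|^3_{L^q_tL^r_x(Q)}\, r^{-1+\alpha\frac{q-3}{q}+d\frac{r-3}{r}},\\
\left|\int pu\cdot\nabla\varphi\right|&\lesssim\|p\|_{L^{q/2}_tL^{r/2}_x(Q)}\|u\|_{L^q_tL^r_x(Q)}\, r^{-1+\alpha\frac{q-3}{q}+d\frac{r-3}{r}}.
\end{align*}
The anisotropy exponent $\alpha=\tfrac{q(r+d)}{(q-1)r}$ given in the statement is exactly the one that forces the two $r$-powers above to coincide, and a direct algebraic check shows that both equal $r^s$. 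Doubling the cylinder absorbs the $Q_{r/2}$ versus $Q_r$ discrepancy, yielding the desired Frostman bound with $C_K$ depending only on $\|u\|_{L^q_tL^r_x(K)}$ and $\|p\|_{L^{q/2}_tL^{r/2}_x(K)}$.

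\textbf{Boundary case and main obstacle.} The boundary-extendable statement is handled identically, replacing \eqref{DR_measure_inter} by \eqref{DR_measure_boundary} and admitting cutoffs in $C^1_c(\overline\Omega\times(0,T])$, so that the cover of $E\subset\overline\Omega\times(0,T]$ consists of (possibly truncated) cylinders meeting $\partial\Omega$ or $\{t=T\}$. Because $(u,p)$ is now assumed to be integrable \emph{globally} on $\Omega\times(0,T)$, the cylinder bound extends uniformly up to the boundary, and the covering argument carries over verbatim. In my view the main subtlety is precisely the pressure term: without any interior Duchon--Robert regularization one has no Calder\'on--Zygmund theory for $p$ up to $\partial\Omega$, which is why the global hypothesis on $p$ must be imposed as a hypothesis. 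Once that is granted, the whole argument reduces to carefully matching H\"older exponents against the anisotropic scaling, the choice of $\alpha$ being precisely the one that equalizes the contributions of $\partial_t\varphi$ and $\nabla_x\varphi$.
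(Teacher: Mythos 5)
Your proposal is correct and follows essentially the same route as the paper: testing the (interior or boundary-extended) Duchon--Robert energy balance against space-time cutoffs adapted to anisotropic cylinders, applying H\"older with exactly the exponent pairs $(\tfrac{r}{2},\tfrac{r}{r-2})$, $(\tfrac{r}{3},\tfrac{r}{r-3})$ in space and their time analogues, choosing $\alpha$ to equalize the two resulting powers of the radius, and then converting the cylinder asymptotics into absolute continuity with respect to $\mathcal{H}^s_\alpha$ and the dimension lower bound via a covering argument (the paper's Proposition 3.2 together with Lemma 2.3 and Corollary 2.4). The only cosmetic difference is that the paper records the local norms in a modulus of continuity $\omega(\delta)$ (yielding the slightly stronger conclusion that $\varepsilon[u]$ vanishes on sets of finite $\mathcal{H}^s_\alpha$-measure when $q<\infty$), whereas you keep a constant $C_K$, which still suffices for the statement as given.
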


A few remarks are in order. First, \cref{t:main_euler} is a direct consequence of the more general \cref{t:euler_general} (which is itself a consequence of \cref{p_euler_balls}) in which we also keep the time scaling parameter $\alpha$ free. We believe such a statement being helpful since there is in general no obvious relation between the measures $\mathcal{H}^s_\alpha$ if both $\alpha$ and $s$ vary. \cref{t:euler_general} should be compared with \cite{S05}*{Theorem 3.2} which we recall in \cref{t:div_measure} below. The latter shows that it is possible to deduce continuity properties (in terms of absolute continuity with respect to the Hausdorff measure) of the divergence of $L^r$ measure divergence vector fields. This approach turns out to be quite unifying, and directly applies to deduce continuity properties of dissipation and thus lower bounds on their dimension for much more general conservation laws (passive scalar transport, Burgers equation, multidimensional compressible and incompressible Euler equations, among others). Indeed, in all these cases the local energy balance directly falls in the setting of vector fields (space-time in our case) whose divergence is a positive bounded measure. Details and various examples are given in \cref{s:general_laws} below.  For further readings on divergence measure field, together with applications to systems of conservation laws, we refer to \cites{CT05,CTZ07,PT08,CTZ09,CT11} and references therein. However, since for fluid equations space and time usually scale very differently, our \cref{t:euler_general} (and thus \cref{t:main_euler} above) should be viewed as a specialized version of the approach in \cite{S05}. We emphasize that the two approaches coincide in the case of bounded solutions (i.e. $r,q=\infty$), while for general integrability conditions, our \cref{t:main_euler} gives sharper dimension bounds.  
\begin{remark}[Pressure hypothesis]\label{r:pressure_hyp}
We remark that in \cref{t:main_euler} the explicit hypothesis on the pressure $p\in L^\frac{q}{2}_{{\rm loc}}(0,T;L^\frac{r}{2}_{{\rm loc}}(\Omega))$ is redundant whenever $r<\infty$ by classical Calderon--Zygmund estimates, at least when $\Omega$ is the whole space $\R^d$ or the periodic box $\T^d$. However, when $r=\infty$ such an assumption is necessary. If $r=\infty$ and without further assumption on the pressure (but only on $u$), then one still has $p\in L^\frac{q}{2}_{{\rm loc}}(0,T;L^\frac{r}{2}_{{\rm loc}}(\Omega))$ for every $r<\infty$. Thus, our result would read as $\eps[u]\ll \mathcal{H}^{s-\gamma}_{\alpha}$, for all $\gamma>0$, where\footnote{To be precise, this conclusion follows by a minor modification of \cref{p_euler_balls}. Indeed, for any $r<+\infty$, it is enough to choose $\alpha = \frac{q}{q-1}$ in place of \eqref{alpha_opt} and modify \eqref{almost_final_est} accordingly. Otherwise \cref{t:main_euler} as stated would also have an arbitrarily small loss in the time exponent, i.e. $\alpha=\frac{q}{q-1}+\gamma'$ for all $\gamma'>0$.}
$$s= d- \frac{2}{q-1}, \quad \alpha = \frac{q}{q-1}.$$ 
Moreover, since $\gamma$ can be taken arbitrarily small, the dimension lower bound \eqref{lowerbound} in case of non-trivial dissipation stays the same (see \cref{general hausdorff dimension}). 

\end{remark}

Concerning the sharpness of \cref{t:main_euler}, the recent works \cites{brue2022,brue2022onsager,drivas2022anomalous} construct bounded sequences  $u^\nu\to u$ with a non-trivial dissipation measure supported at a single instant in time -- for this example, our theorem implies $\dim_{\mathcal{H}} \spt{\varepsilon[u]}= d$, showing that the lower bound  \eqref{lowerbound} cannot be improved.  This could, in fact, also be deduced by prior work of Leslie and Shvydkoy \cites{LS,LS2} which we discuss later.  Our result also implies that of Jeong and Yoneda \cite{JY21}*{Section 1.3.3}, who show that a $C^\alpha$ point singularity cannot support anomalous dissipation. Further explicit numerology in some physically relevant cases implied by our Theorem are discussed in \cref{s:numerology_euler}.
Note that the analog of \cref{t:main_euler} directly applies to the passive scalar example \cite{drivas2022anomalous}, but does not apply directly to the Euler solutions constructed in \cites{brue2022,brue2022onsager}, since those involve an external forcing. However, we remark that  (suitably integrable) external forces can be easily added to our analysis; details are sketched in \cref{s:ext_forc}.

Moreover, the recent convex integration constructions \cites{DK22, I_local22, GKN23} provide solutions with non-trivial and non-negative dissipation, on which our results directly apply. However, if one is only interested in getting lower bounds, then an adaptation of our proofs also applies to the signed case.  In \cref{s:signed_case} we will give all the necessary details.  We decided to state our main results in case of positive dissipation measures since in this case we can get stronger conclusions, such as the regularity of the dissipation in terms of the reference Hausdorff one, and (perhaps most importantly) global bounds on the local upper densities (see \cref{p_euler_balls}).

Finally, by the experiments by Meneveau and Sreenivasan \cites{Meneveau87,Meneveau88,Meneveau91}, assuming their flows remain bounded with increasing Reynolds number, the dimension of support of anomalous dissipation is 0.87 above our lower bound, indicating that -- unlike the simple shock singularities and mathematical constructions -- incompressible turbulence does not sit at the lower threshold of our theorem.  It would be interesting to obtain additional experimental and numerical measurements of the space-time support of the dissipation, perhaps leveraging a Lagrangian interpretation \cites{D19,C22}. 

In connection with our \cref{t:main_euler}, we mention the work \cite{LS2} by Leslie and Shvydkoy: with a fine oscillation/concentration analysis of the spatial dissipation measure at the first blowup time, they deduce regularity properties of the defect measure due to the possible failure of energy conservation. Thus, lower bounds on the dimension of the support of the spatial defect measure directly follow in the case of non-conservative solutions.  The main novelties of the present work are to establish lower bounds on the support of the full space-time dissipation measure (as opposed to dissipation on a hypothetical first singularity time slice) and, at the same time,  to provide a framework which is robust enough to include boundaries and general conservation laws.   As in \cite{LS2}, we emphasize that our results can be thought of as an energy conservation criterion, which directly follows from the absolute continuity property $\eps [u]\ll \mathcal{H}_\alpha^s$:  if $\eps[u]$ is concentrated on some $S$ with $\dim_{\mathcal{H}_\alpha} S <s$, then $\eps [u]\equiv 0$.  Moreover, our \cref{p_euler_balls} always achieves the stronger conclusion on the local asymptotic of the measure, from which the absolute continuity with respect to $\mathcal{H}^s_\alpha$ follows by \cref{AC with respect ot general hausdorff}. As we prove in \cref{p:counter_example}, having the right local asymptotic of a measure, is strictly stronger than the corresponding absolute continuity property.  The importance of the local behavior of dissipation measures in turbulent flows can be seen in its connection with the multifractal spectrum which naturally arises in these contexts  (see \cite{Falc}*{Chapter 17} as well as \cite{Frisch95}*{Chapter 8}). 

\subsection{General conservation laws setup}\label{s:general_laws}

Here we discuss how the measure theoretic setting of measure divergence vector fields from \cite{S05} applies to weak solutions of general systems of conservation laws, giving sharp dimension bounds in several aforementioned examples. We only discuss the local version (not taking the boundary into account), thus the argument applies to bounded and unbounded domains $\Omega$.

Let $u:\Omega\times(0,T)\to \mathbb{R}^N$ be a weak solution of
\begin{equation}\label{conslaw}
    \partial_t u + \div F[u] = 0
\end{equation}
for some flux $F:\mathbb{R}^N\to \mathbb{R}^{d \times N}$.  Suppose that smooth solutions of \eqref{conslaw} admit a conserved entropy $\eta:\mathbb{R}^N\to \mathbb{R}$ which, itself, satisfies a local conservation law
\begin{align}
\label{entropy_equality}
\partial_t \eta[u]+\div  Q [u] =0
\end{align}
for some corresponding entropy flux $Q:\mathbb{R}^N\to \mathbb{R}^d$.  We may regard the condition \eqref{entropy_equality} as a space-time divergence-free condition on a certain vector field $ \mathsf{V} :\Omega\times(0,T)\to \mathbb{R}^{N+1}$
\begin{equation*}
    \div_{t,x} \mathsf{V} =0, \qquad \mathsf{V}:=\Big(\eta[u],Q[u]\Big).
\end{equation*}

\begin{definition}[Entropy solutions]
Entropy solutions $u:\Omega\times(0,T)\to \mathbb{R}^N$ are weak solutions of \eqref{conslaw} such that
\begin{equation*}\label{entropyeqn}
\div_{t,x}\mathsf{V} =-\mathsf{D}
\end{equation*}
where $\mathsf{D}\in \mathcal{M}_{{\rm loc}}(\Omega\times (0,T))$ is a positive Borel measure, termed the anomalous dissipation measure.
\end{definition}
Such weak solutions may arise in various contexts as inviscid limits of non-ideal systems.  

\begin{example}[Incompressible Euler]
Weak solutions $u$ of incompressible Euler equations \eqref{E}
that arise as strong--$L^3$ vanishing viscosity limits of classical Navier--Stokes solutions $u^\nu\to u$ have \cite{DR00}
\begin{equation*}
 \mathsf{V} = \left(\frac{|u|^2}{2}, \left(\frac{|u|^2}{2}+ p\right)u\right), \qquad \mathsf{D}:=\lim_{\nu\to 0} \nu |\nabla u^\nu|^2.
\end{equation*}
\end{example}

\begin{example}[Incompressible Navier--Stokes]
Suitable Leray--Hopf weak solutions $u^\nu$ of the incompressible Navier--Stokes equations \eqref{NS} have (see \eqref{nsebal} below)
\begin{equation*}
 \mathsf{V} = \left(\frac{|u^\nu|^2}{2}, \left(\frac{|u^\nu|^2}{2}+ p^\nu\right)u^\nu- \nu \nabla \frac{|u^\nu|^2}{2}\right), \qquad \mathsf{D}:=D[u^\nu]+\nu |\nabla u^\nu|^2,
\end{equation*}
with $D[u^\nu]$ a positive distribution, thus locally a positive Radon measure.
\end{example}

\begin{example}[Transported Scalars]
Weak solutions $\theta$ of  scalar transport equation by a bounded, divergence-free vector field $u$
\begin{equation*}
\partial_t \theta + \div(u \theta)=0
\end{equation*}
that arise as zero-diffusion  limits have
\begin{equation*}
 \mathsf{V} = \left(\frac{\theta^2}{2}, u\frac{\theta^2}{2} \right), \qquad \mathsf{D}:=\lim_{\kappa\to 0}\kappa |\nabla \theta^\kappa|^2.
 \end{equation*}
\end{example}

\begin{example}[Compressible Euler]
 Weak solutions $(\rho,u,E)$ of full compressible Euler system\footnote{In this context the pressure $p$ is prescribed by a thermodynamic equation of state $p=p(u,\rho)$.}
\begin{equation*}
\left\{\begin{array}{l}
 \partial_t \rho + \div ( \rho u)= 0\\
 \partial_t (\rho u) + \div(\rho u \otimes u + p I) = 0 \\
\partial_t E + \div ((p+E) u) = 0,
\end{array}\right.
\end{equation*}
that arise as strong--$L^r $ vanishing viscosity limits of compressible Navier--Stokes \cite{DE18}*{Equation (16)} have
\begin{equation}\label{en_bal_compressible}
 \mathsf{V} = \Big(s, us\Big), \qquad \mathsf{D}:=\lim_{\eta,\zeta,\kappa\to 0}\left[ 2\eta \frac{|S[u]|^2}{T} + \zeta \frac{|\div u|^2}{T} +\kappa \frac{|\nabla T|^2}{T^2}\right]
\end{equation}
where  $s=s(u,\rho)$ is the physical entropy,  $T=T(u,\rho)$ is the temperature of the fluid (both of them defined by an appropriate equation of state) and 
$$
S[u]:= \frac{1}{2}\left(\nabla u+ (\nabla u)^t - \frac{2}{d} (\div u) I\right)
$$
is the  strain tensor.  In \eqref{en_bal_compressible} $\eta$ is the shear viscosity, $\zeta$ is the bulk viscosity and $\kappa$ is the thermal conductivity of the fluid.
\end{example}
For the sake of simplicity, we state only the result about interior dissipation, under a single space-time $L_{\rm loc}^r(\Omega\times (0,T))$ integrability assumption. Thus, we study the isotropic Hausdorff measure $\mathcal{H}_\alpha^s$ for $\alpha=1$. 
As already discussed before, we stress that the two approaches (space-time anisotropic vs isotropic) give the same results in case of bounded solutions. 

\begin{theorem}[General conservation laws]\label{t:main_general_laws}
Let $d\geq 1$, $\frac{d+1}{d}\leq r\leq \infty$ and $u:\Omega\times (0,T)\rightarrow \R^d$ be an entropy solution to \eqref{conslaw}, with $\mathsf{D}\in \mathcal{M}_{{\rm loc}}(\Omega\times (0,T))$ the corresponding anomalous dissipation measure. Assume that the entropy and the entropy flux enjoy $\eta[u],Q[u]\in L^r_{{\rm loc}}(\Omega\times (0,T))$. Set  $s=d+1-\frac{r}{r-1}$, with the usual convention that $s=d$ for $r=\infty$. Then, if $r<\infty$, we have that  
$$
\mathcal{H}^s(B)<\infty\,  \Rightarrow \,\mathsf{D}(B)=0, \quad \forall B\subset\joinrel\subset \Omega\times (0,T) \text{ Borel},
$$
while for $r=\infty$ we have $\mathsf{D}\ll \mathcal{H}^d$. In particular, if $\mathsf{D}$ is non-trivial and concentrated on a space-time set $S$, we have the dimensional lower bound 
$$
\dim_{\mathcal{H}}S\geq s.
$$
\end{theorem}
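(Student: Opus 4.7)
The plan is to interpret the entropy identity~\eqref{entropy_equality} as the statement that the space-time vector field
$$\mathsf{V}=(\eta[u],Q[u])\in L^r_{{\rm loc}}(\Omega\times(0,T);\R^{d+1}),$$
viewed on the $(d+1)$-dimensional cylinder $\Omega\times(0,T)$, has distributional divergence $\div_{t,x}\mathsf{V}=-\mathsf{D}$ equal to a nonnegative Radon measure. This places us precisely in the $L^r$ divergence-measure framework of \cite{S05}, encapsulated by \cref{t:div_measure}.

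After a one-line verification that testing the entropy equation against an arbitrary $\varphi\in C^1_c(\Omega\times(0,T))$ yields
$$\int \mathsf{V}\cdot\nabla_{t,x}\varphi\,\rmd x\,\rmd t=\int\varphi\,\rmd\mathsf{D},$$
the conclusion follows from a direct application of \cref{t:div_measure} with ambient dimension $N=d+1$: an $L^r$ vector field whose distributional divergence is a positive Radon measure cannot produce a measure charging Borel sets of finite $\mathcal{H}^{N-r/(r-1)}$-measure. Substituting $N=d+1$ gives the critical exponent $s=d+1-r/(r-1)$ stated in the theorem; the assumption $r\geq (d+1)/d$ is exactly what keeps $s\geq 0$, and the case $r=\infty$ reduces to the classical statement $\mathsf{D}\ll\mathcal{H}^d$ for bounded divergence-measure vector fields. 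Note the use of the isotropic scaling $\alpha=1$: here, unlike the Euler setting of \cref{t:main_euler}, one has a single integrability exponent in space-time, so no anisotropic refinement is available.

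For the dimensional lower bound, assume $\mathsf{D}$ is nontrivial and concentrated on $S$. Exhausting $\Omega\times(0,T)$ by a nested sequence of compacts $K_n$, one has $\mathsf{D}(S\cap K_n)>0$ for some $n$; the preceding absolute continuity prevents $\mathcal{H}^s(S\cap K_n)$ from being finite, so $\dim_{\mathcal{H}}(S\cap K_n)\geq s$ and hence $\dim_{\mathcal{H}}S\geq s$. The only nontrivial ingredient in this plan is the black-box \cref{t:div_measure}, which itself rests on the potential-theoretic fact that positive measures in $W^{-1,r}$ do not charge sets of $(1,r/(r-1))$-Sobolev capacity zero, combined with standard capacity-to-Hausdorff comparisons. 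This is the main technical obstacle, but it is entirely supplied by the cited framework, so the proof amounts essentially to the identification of the correct vector field $\mathsf{V}$ and the bookkeeping of exponents.
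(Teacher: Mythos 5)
Your proposal is correct and follows the same route the paper itself takes: the paper explicitly states that \cref{t:main_general_laws} is obtained by applying the divergence-measure result \cref{t:div_measure} from \cite{S05} to the space-time field $\mathsf{V}=(\eta[u],Q[u])$ in ambient dimension $d+1$, which is precisely your argument, including the exponent bookkeeping $s=d+1-\tfrac{r}{r-1}$ and the exhaustion-by-compacts step for the dimension lower bound.
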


For bounded solutions, \cref{t:main_general_laws} shows that non-trivial dissipation has to happen on a set of Hausdorff dimension at least $d$, and the recent examples \cites{drivas2022anomalous,crippa} show that our lower bound is realized, thus proving optimality of the exponent. 
For compressible systems (Burgers equation, multidimensional Euler), \emph{entropy producing} shock solutions exhibit discontinuities on co-dimension one surfaces in space that evolve smoothly in time and have entropy production supported on a space-time set of dimension $d$   \cites{Majda,BDSV}. Shock solutions are bounded weak solutions which lie in the space $L^\infty(0,T; B_{r,\infty}^{1/r}(\Omega))$ for all $r\geq 1$, thus having a well-defined entropy measure \cite{DE18}.

\subsection{Suitable Leray--Hopf solutions of Navier--Stokes}
By means of the same techniques, we study the dissipation for the incompressible Navier--Stokes equations. Since the argument is completely local, it directly applies to $\Omega$, the full space $\R^d$, the periodic box $\T^d$ and any general bounded domain if no-slip boundary conditions are imposed.

To define \emph{dissipation} in the viscous context, let $u^\nu \in L^\infty(0,T;L^2(\Omega)) \cap L^2(0,T;H^1(\Omega))$ be a   Leray--Hopf weak solution to the Navier--Stokes system \eqref{NS} in $\Omega\times (0,T)$. Recall that as soon as $u^\nu \in L^3_{{\rm loc}}(\Omega\times (0,T))$, which holds without further assumption if $d=3$, the following energy identity holds in the sense of distributions
\begin{equation}\label{nsebal}
    \partial_t \frac{|u^\nu|^2}{2}+\div \left(\left(\frac{|u^\nu|^2}{2}+p^\nu \right) u^\nu \right) - \nu\Delta  \frac{\abs{u^\nu}^2}{2}  =-D[u^\nu] - \nu\abs{\nabla u^\nu}^2.
\end{equation}
Here, $D[u^\nu]$ is the  dissipation distribution associated to $u^\nu$, see Duchon \& Robert \cite{DR00}. The distribution $D[u^\nu]$ measures the failure of the local energy balance for viscous fluids due to possible singularities, indeed $D[u^\nu]\equiv 0$ whenever $u$ is smooth. We recall that for \emph{dissipative} solutions in the sense of \cite{DR00} (or analogously \emph{suitable} solutions as defined in \cite{CKN}) $D[u^\nu]$ is a positive distribution, and thus it is a positive locally finite measure. Remarkably, at least in the most physically relevant three dimensional case, the existence of dissipative weak solutions follows by the very same original argument by Leray \cite{L34}.

Because of the parabolic scaling preferred by the Laplacian, we work with the parabolic Hausdorff measure $\mathcal{P}^s$, together with the corresponding parabolic Hausdorff dimension $\dim_{\mathcal{P}}$ (see \cref{s:tools} for precise definition). 
 We prove the following result. 
\begin{theorem}[Leray--Hopf solutions in $L^q_tL^r_x$]\label{t:leray_hopf_intro}
Let $\Omega\subset \R^d$, $d\geq 2$, be a domain, possibly with boundary. Let $u^\nu \in L^\infty(0,T;L^2(\Omega)) \cap L^2(0,T;H^1(\Omega))$ be a Leray--Hopf weak solution to the incompressible Navier--Stokes equations \eqref{NS} such that $D[u^\nu]\in \mathcal{M}_{{\rm loc}}(\Omega\times (0,T))$. Assume $u^\nu\in L_{{\rm loc}}^q(0,T;L^r_{{\rm loc}}(\Omega))$ and $p^\nu\in L^\frac{q}{2}_{{\rm loc}}(0,T;L_{{\rm loc}}^\frac{r}{2}(\Omega))$,  for some $q,r\geq 3$ such that $\frac{2}{q}+\frac{d}{r}\geq1$ and 
$$
s=d+1-3\left(\frac{d}{r}+\frac{2}{q}\right)\geq 0.
$$
Then, $D[u^\nu]\ll \mathcal{P}^s$ and, if the dissipation is non-trivial and concentrated on a space-time set $S$, we have the dimensional lower bound
$$
\dim_{\mathcal{P}} S \geq s.
$$
\end{theorem}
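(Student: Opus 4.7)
The strategy mirrors the one used for \cref{t:main_euler} via \cref{p_euler_balls}, now specialized to the parabolic scaling dictated by the viscous term. I would start from the local energy balance \eqref{nsebal}, test it against a parabolic bump function at scale $r$, estimate every resulting contribution by an anisotropic H\"older inequality adapted to $L^q_tL^r_x$, and then convert the uniform upper parabolic density estimate into the absolute continuity statement $D[u^\nu]\ll \mathcal{P}^s$.

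More precisely, fix $K\Subset\Omega\times(0,T)$ and pick $r_0>0$ so that every parabolic cylinder $Q_{2r}(z_0):=B_{2r}(x_0)\times(t_0-4r^2,t_0+4r^2)$ with $z_0=(x_0,t_0)\in K$ and $r\leq r_0$ lies in a fixed ambient region $K'\Subset\Omega\times(0,T)$. Take a cutoff $\varphi_r\in C^\infty_c(Q_{2r}(z_0))$ with $0\leq \varphi_r\leq 1$, $\varphi_r\equiv 1$ on $Q_r(z_0)$, and the usual parabolic bounds $|\partial_t\varphi_r|\lesssim r^{-2}$, $|\nabla\varphi_r|\lesssim r^{-1}$, $|\Delta\varphi_r|\lesssim r^{-2}$. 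Testing \eqref{nsebal} against $\varphi_r$ and dropping the non-negative viscous term $\nu|\nabla u^\nu|^2\varphi_r$ yields
\begin{align*}
D[u^\nu](Q_r(z_0)) \leq \int_{Q_{2r}}\frac{|u^\nu|^2}{2}|\partial_t\varphi_r| + \Big(\frac{|u^\nu|^2}{2}+|p^\nu|\Big)|u^\nu||\nabla\varphi_r| + \nu\,\frac{|u^\nu|^2}{2}|\Delta\varphi_r|\, dx\, dt.
\end{align*}

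Now, for $q,r\geq 3$, applying H\"older first in space and then in time on $Q_{2r}$ gives
$$\int_{Q_{2r}}|u^\nu|^k\lesssim \|u^\nu\|_{L^q_tL^r_x(K')}^k\, r^{(d+2)-2k/q-dk/r},\qquad k\in\{2,3\},$$
together with the analogous estimate $\int_{Q_{2r}}|p^\nu||u^\nu|\lesssim \|p^\nu\|_{L^{q/2}_tL^{r/2}_x(K')}\|u^\nu\|_{L^q_tL^r_x(K')}\, r^{(d+2)-6/q-3d/r}$. Coupled with the derivative bounds on $\varphi_r$, the quadratic contributions scale like $r^{d-4/q-2d/r}$ (with a constant depending on $\nu$), while the cubic and pressure contributions scale like $r^{d+1-6/q-3d/r}=r^s$. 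The hypothesis $d/r+2/q\geq 1$ is exactly what yields $d-4/q-2d/r\geq s$, so for $r\leq\min(r_0,1)$ every term is bounded by $C_{K'}\, r^s$, giving the uniform upper density estimate
$$\limsup_{r\to 0^+}\frac{D[u^\nu](Q_r(z_0))}{r^s}<\infty\qquad\forall z_0\in K.$$
This translates into $D[u^\nu]\ll \mathcal{P}^s$ on $K$ by a Vitali--Besicovitch differentiation argument in the parabolic metric, in the very same spirit as the one invoked for $\mathcal{H}^s_\alpha$ with $\alpha=2$ in the proof of \cref{t:main_euler}. Since $K$ was arbitrary, $D[u^\nu]\ll \mathcal{P}^s$ globally; and if $D[u^\nu]$ is concentrated on a Borel $S$ with $\dim_{\mathcal{P}}S<s$, then $\mathcal{P}^s(S)=0$, hence $D[u^\nu](S)=0$, contradicting non-triviality.

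\emph{Main obstacle.} The principal care lies in pinning down the sharp exponent via the anisotropic H\"older scaling on parabolic cylinders. The condition $d/r+2/q\geq 1$ is precisely the threshold that makes the cubic convective and pressure-work terms bind; outside this regime the quadratic $\partial_t$ and viscous terms would dictate a larger bound and the exponent $s=d+1-3(d/r+2/q)$ would no longer be achieved. In particular, the Prodi--Serrin endpoint $d/r+2/q=1$ recovers $s=1$, consistent with the Caffarelli--Kohn--Nirenberg partial regularity threshold. Once the cubic terms are identified as dominant, the remaining step from local density estimate to absolute continuity (and thence to the dimensional lower bound) is a standard covering argument with no new analytic input beyond what is already used in \cref{p_euler_balls}.
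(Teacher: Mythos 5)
Your proposal is correct and follows essentially the same route as the paper: the paper proves \cref{t:leray_hopf_intro} via \cref{p:leray-hopf_balls}, testing \eqref{nsebal} against products of space and time cutoffs on anisotropic cylinders $\mathcal{C}^\alpha_\delta(x,t)$, running the identical H\"older estimates on the four terms, and then invoking the elementary covering argument of \cref{AC with respect ot general hausdorff} (no Vitali--Besicovitch differentiation is actually needed) before specializing to $\alpha=2$. Your exponent bookkeeping, including the observation that $\tfrac{d}{r}+\tfrac{2}{q}\geq 1$ is exactly the condition making the cubic convective and pressure terms dominant over the quadratic $\partial_t$ and Laplacian terms, matches the paper's \eqref{ugly_s} with $\alpha=2$.
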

 \cref{r:pressure_hyp} on the pressure hypothesis applies also here. Differently from Euler, for Leray--Hopf weak solutions of the three-dimensional Navier--Stokes system, the integrability on the pressure on general bounded domains follows by that of the velocity. Details are given in \cref{s:pressure_leray}.

The theorem above is a particular case of the more general \cref{t:leray-hopf_general} (which follows by \cref{p:leray-hopf_balls}) stated for any Hausdorff measure $\mathcal{H}^s_\alpha$. As a byproduct, we also infer the Morrey-type estimate \eqref{morrey-type_grad} on $|\nabla u^\nu|^2$, which, to the best of our knowledge, is new. We refer to \cref{r:scaling_NS} for further comments on the possible choices of $\alpha$ in the Navier--Stokes setting. The assumption $\frac{2}{q}+\frac{d}{r}\geq 1$ is natural in the sense that the well known Prodi--Serrin criterion \cites{P59,S62,S83} implies smoothness of $u$, and thus $D[u^\nu]\equiv 0$, whenever $\frac{2}{q}+\frac{d}{r}=1$ with $r>d$. When $d=3$, see also \cite{ISS03} for the limiting case $u^\nu\in L^\infty(0,T;L^3(\Omega))$. Moreover, in the three dimensional setting, \cref{t:leray_hopf_intro} is sharp in the following sense. If $u^\nu$ belongs to the Prodi--Serrin regularity class $\frac{2}{q}+\frac{3}{r}=1$, then $s=1$. By the Caffarelli--Kohn--Nirenberg partial regularity \cite{CKN}, we have that $\mathcal{P}^1(S_u)=0$ (where $S_u$ denotes the singular set, if any). Thus, we infer that $D[u^\nu]\equiv 0$. Indeed, since $\spt D[u^\nu]\subset S_u$, the property $D[u^\nu]\ll \mathcal{P}^1$ implies that $D[u^\nu]$ is necessarily trivial,  matching the fact that any such solution is smooth.  We refer to the recent works \cites{Wu21,Wu22} for partial regularity results of Navier--Stokes in higher space dimension.

Let us recall that global energy conservation (i.e. integrating \eqref{nsebal} in space) has been known to hold since the celebrated work of Lions \cite{L60} for all $L^4(\Omega\times (0,T))$ Leray--Hopf solutions in three dimensions.  Later on this has been generalized in \cite{S74} to $u\in L^q((0,T);L^r(\Omega))$ if $\frac{2}{q}+\frac{2}{r}\leq 1$, for some $r\geq 4$.
More recent related results in studying conditions implying the total energy balance can be found in \cites{LS,LS2}. In particular, in \cites{LS} Leslie and Shvydkoy deduce that global energy equality must hold if the singular set $S_u$ has small enough parabolic Hausdorff dimension. Indeed, the energy conservation property is also a consequence of the fact that $D[u^\nu]\ll \mathcal{P}^s$, as soon as $\dim_{\mathcal{P}}\big( \spt D[u^\nu]\big)<s$. The results from \cite{LS} do not directly follow from our \cref{t:leray_hopf_intro} since the two numerologies do not match in all the ranges of $r,q$.  Even if closely related to \cites{LS,L60,S74}, our \cref{t:leray_hopf_intro} aims to analyse the regularity properties of $D[u^\nu]$, yielding to non-trivial consequences in ranges in which the dissipation is not necessarily zero.  Moreover,  with respect to \cite{LS},  \cref{t:leray-hopf_general} deals with the more general reference measure $\mathcal{H}^s_\alpha$ instead of fixing the parabolic scaling.  As well as we did for the Euler equations, also here (see \cref{p:leray-hopf_balls}) we provide explicit asymptotic of the measure on space-time  cylinders,  yielding the corresponding uniform bounds on  fractional densities of the dissipation with respect to $\mathcal{H}^s_\alpha$.

\section{Measure theoretic tools}\label{s:tools}

Here we list the main tools used in the manuscript and outline the strategy of proof. 

\subsection{Anisotropic Hausdorff measure}

Given $\e, \delta>0$, we denote by
$$\mathcal{C}^\alpha_{\delta} (x,t) = B_\delta(x) \times (t-\delta^\alpha, t+\delta^\alpha) $$
the space-time cylinder centred at $(x,t)$.

\begin{definition}[Anisotropic Hausdorff measure]\label{d:hausdorff_meas}
Fix $\alpha \geq 0$. We denote by 
$$\mathcal{U}_\alpha = \{ \mathcal{C}^\alpha_{\delta}(x,t) \ | \ x \in \R^d, \  t \in \R, \ \delta >0 \}. $$
Fix $s \geq 0$ and let $\Psi^s : \mathcal{U}_\alpha \to [0, + \infty]$ be the gauge function defined by 
$$\Psi^s (\mathcal{C}^\alpha_{\delta}(x,t)) = \delta^s. $$
We denote by $\mathcal{H}_\alpha^s$ the measure on $\R^d \times \R$ associated to the family $\mathcal{U}_\alpha$ and the gauge function $\Psi^s$ via the Caratheodory construction (see for instance \cite{MAT}).
\end{definition}

We remark that $\mathcal{H}_\alpha^s $ is an anisotropic $s-$dimensional Hausdorff measure. Indeed, $\mathcal{H}_1^s$ is the standard $s-$dimensional Hausdorff measure $\mathcal{H}^s$ and $\mathcal{H}_2^s$ is the parabolic $s-$dimensional Hausdorff measure, usually denoted by $\mathcal{P}^s$. More generally, $\alpha$ can be thought as a time (in)stability parameter: for a fixed $s$, larger $\alpha$ is stronger is the notion of the measure, i.e. 
\begin{equation}\label{hausdorf_meas_instabil}
    \mathcal{H}^s_{\alpha}(A)\leq \mathcal{H}^s_{\alpha'}(A),\quad \text{ whenever } \alpha<\alpha',
\end{equation}
for every Borel set $A\subset \R^d\times \R$.
\begin{definition}[Anisotropic Hausdorff dimension] \label{general hausdorff dimension}
Let $\alpha \geq 0$ and let $A \subset \R^d \times \R$ be a Borel set. We define the $\alpha-$Hausdorff dimension of $A$ by
$$\dim_{\mathcal{H}_\alpha} (A) = \inf \{s \geq 0 \ | \ \mathcal{H}_\alpha^s(A) = 0 \}. $$ 
\end{definition}

By the above discussion, the notion of $\alpha-$Hausdorff dimension agrees with that of Hausdorff dimension for $\alpha=1$ and parabolic dimension for $\alpha=2$. Moreover, by \eqref{hausdorf_meas_instabil}, if $\alpha \leq \alpha'$ then $\dim_{\mathcal{H}_\alpha}(A)\leq \dim_{\mathcal{H}_{\alpha'}}(A)$.

We say that $\omega: [0, +\infty) \to [0, +\infty)$ is a modulus of continuity if it is non decreasing and $\omega(\delta) \to 0$ as $\delta \to 0^+$. We state and prove the following basic lemma. 

\begin{lemma}[Absolute continuity vs cylinders asymptotic] \label{AC with respect ot general hausdorff}
Fix $\alpha,s \geq 0$ and let $\mu$ be a positive measure on $\R^d \times \R$. Let $\delta_0>0$ and assume that for any $(x,t) \in \spt{\mu}$ and for any $\delta \in (0, \delta_0)$ it holds that 
\begin{equation}
    \mu\left(\mathcal{C}^\alpha_{\delta}(x,t) \right) \leq \omega(\delta) \delta^s, \label{smallness}
\end{equation}
where $\omega: [0, +\infty) \to [0, +\infty)$ is a bounded non decreasing function. Then, $\mu$ is absolutely continuous with respect to $\mathcal{H}_\alpha^s$. If we assume in addition that $\omega$ is a modulus of continuity, then $\mu(A) = 0$ for any Borel set $A\subset \R^d \times \R$ such that $\mathcal{H}_\alpha^s(A) < +\infty$. 
\end{lemma}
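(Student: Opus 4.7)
The strategy is a direct covering argument using the Carath\'eodory definition of $\mathcal{H}^s_\alpha$, combined with a standard ``recentering'' trick so that the hypothesis \eqref{smallness}, which only bounds $\mu$ on cylinders centered in $\spt\mu$, can be applied to an arbitrary cover of $A$.

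First I would set up both parts simultaneously. Fix $\delta \in (0,\delta_0)$ and let $A \subset \R^d\times \R$ be Borel; by the Carath\'eodory construction one has
\[
\mathcal{H}^s_{\alpha,\delta}(A) \;=\; \inf\Bigl\{\sum_i \delta_i^s \;:\; A \subset \bigcup_i \mathcal{C}^\alpha_{\delta_i}(x_i,t_i),\ \delta_i < \delta\Bigr\},
\]
and $\mathcal{H}^s_{\alpha,\delta}(A)\nearrow \mathcal{H}^s_\alpha(A)$ as $\delta\to 0^+$. For any such cover, cylinders that do not meet $\spt\mu$ contribute nothing to $\mu(A)$, so I may discard them. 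For each remaining $\mathcal{C}^\alpha_{\delta_i}(x_i,t_i)$ I pick $(x_i',t_i')\in \spt\mu \cap \mathcal{C}^\alpha_{\delta_i}(x_i,t_i)$, and observe the elementary inclusion
\[
\mathcal{C}^\alpha_{\delta_i}(x_i,t_i) \;\subset\; \mathcal{C}^\alpha_{2\delta_i}(x_i',t_i').
\]
Since $2\delta_i < 2\delta < \delta_0$ (assuming $\delta<\delta_0/2$, which is harmless as $\delta\to 0$) and the recentered cylinder is centered in $\spt\mu$, the hypothesis \eqref{smallness} together with monotonicity of $\omega$ gives
\[
\mu\bigl(\mathcal{C}^\alpha_{\delta_i}(x_i,t_i)\bigr) \;\leq\; \omega(2\delta_i)(2\delta_i)^s \;\leq\; 2^s\,\omega(2\delta)\,\delta_i^s.
\]
Summing over $i$ yields the master inequality
\[
\mu(A) \;\leq\; 2^s\,\omega(2\delta)\,\sum_i \delta_i^s,
\]
valid for every admissible cover, and hence, by taking the infimum,
\[
\mu(A) \;\leq\; 2^s\,\omega(2\delta)\,\mathcal{H}^s_{\alpha,\delta}(A).
\]

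From here the two conclusions follow immediately. For absolute continuity, suppose $\mathcal{H}^s_\alpha(A)=0$. Then $\mathcal{H}^s_{\alpha,\delta}(A)=0$ for every $\delta>0$, and using only that $\omega$ is bounded (say $\omega\leq M$) the master inequality gives $\mu(A)\leq 0$, i.e.\ $\mu(A)=0$, which is $\mu\ll \mathcal{H}^s_\alpha$. For the strengthened statement, assume in addition that $\omega$ is a modulus of continuity and that $\mathcal{H}^s_\alpha(A)<\infty$; then $\mathcal{H}^s_{\alpha,\delta}(A)$ is uniformly bounded by $\mathcal{H}^s_\alpha(A)$, and sending $\delta\to 0^+$ makes $\omega(2\delta)\to 0$, forcing $\mu(A)=0$.

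The proof is essentially routine; the only non-automatic step is the recentering, which is needed because \eqref{smallness} is stated only for cylinders based at points of $\spt\mu$ while a general $\mathcal{H}^s_\alpha$-cover is unrestricted. The factor $2^s$ this produces is harmless for both conclusions, since in the first case any finite bound suffices and in the second the decay of $\omega(2\delta)$ absorbs it. No further measure-theoretic machinery (such as Vitali or Besicovitch covering) is required.
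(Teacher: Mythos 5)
Your proof is correct and follows essentially the same covering argument as the paper; in fact you are slightly more careful, since you explicitly recenter the covering cylinders onto $\spt\mu$ before invoking \eqref{smallness}, a step the paper's proof glosses over. One small caveat: the inclusion $\mathcal{C}^\alpha_{\delta_i}(x_i,t_i)\subset \mathcal{C}^\alpha_{2\delta_i}(x_i',t_i')$ holds in the time direction only when $2\delta_i^\alpha\leq (2\delta_i)^\alpha$, i.e.\ for $\alpha\geq 1$; for $\alpha\in(0,1)$ you should enlarge to $\mathcal{C}^\alpha_{C\delta_i}(x_i',t_i')$ with $C=\max\bigl(2,2^{1/\alpha}\bigr)$, which only changes the harmless multiplicative constant $2^s$ to $C^s$ and affects neither conclusion.
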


\begin{proof}
Let $A \subset \spt{\mu}$ be a null set with respect to $\mathcal{H}_\alpha^s$. Then, for any $\e>0$ there exists a covering $\{\mathcal{C}^\alpha_{r_i}(x_i, t_i)\}_{i\in I}$ of $A$ with $r_i \leq \delta_0$ for any $i \in I$ and such that 
\begin{equation}
    \sum_{i \in I} r_i^s \leq \e. \nonumber
\end{equation}
In particular, the family $\{\mathcal{C}^\alpha_{r_i}(x_i, t_i)\}_{i\in I}$ is countable. Then, we have that 
\begin{equation}
    \mu(A) \leq \sum_{i \in I} \mu(\mathcal{C}^\alpha_{r_i}(x_i, t_i)) \leq \omega(\delta_0) \sum_{i \in I} r_i^s \leq \omega(\delta_0) \e . \nonumber
\end{equation}
Since $\e>0$ is arbitrary, we infer that $\mu(A) = 0$. 

Assume in addition that $\omega$ is a modulus of continuity. Given a Borel set $A \subset \spt{\mu}$ such that $\mathcal{H}_\alpha^s(A)$ is finite and $\delta \in (0, \delta_0)$, we find a covering $\{\mathcal{C}^\alpha_{r_i}(x_i, t_i)\}_{i\in I}$ of $A$ such that $r_i \leq \delta$ for any $i \in I$ and 
$$\sum_{i\in I} r_i^s \leq \mathcal{H}_\alpha^s(A)+1< +\infty. $$
As before, we deduce that 
\begin{equation}
    \mu(A) \leq \sum_{i \in I} \mu(\mathcal{C}^\alpha_{r_i}(x_i, t_i)) \leq \omega(\delta) \sum_{i \in I} r_i^s \leq \omega(\delta) \left(\mathcal{H}_\alpha^s(A)+1\right). \nonumber
\end{equation} 
Letting $\delta \to 0^+$, we infer that $\mu(A) = 0$. 
\end{proof}
The previous lemma shows that a quantitative asymptotic behavior on cylinders implies absolute continuity of the measure with respect to a suitable anisotropic Hausdorff measure, according to the scaling between time and space of the cylinders. Indeed, this is the general strategy to prove all the results contained in this paper.  From \cref{AC with respect ot general hausdorff} we obtain the following result. 

\begin{corollary}[Support lower bound]\label{c:lowerbound_support}
Fix $\alpha\geq 0,s \geq 0$ and let $\mu$ be a positive measure on $\R^d \times \R$ that satisfies \eqref{smallness}. Assume that $\mu$ is non trivial and concentrated on a set $S$. Then, we have the dimensional lower bound
$$\dim_{\mathcal{H}_\alpha}S \geq s. $$ 
\end{corollary}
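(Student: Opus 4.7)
The plan is to derive the statement directly from \cref{AC with respect ot general hausdorff} by a contradiction argument combined with the definition of the anisotropic Hausdorff dimension. First, I would observe that the hypothesis \eqref{smallness} is exactly what \cref{AC with respect ot general hausdorff} requires (taking the bounded non-decreasing function to be the $\omega$ provided by the assumption, no continuity-at-zero needed), so the first part of that lemma immediately yields $\mu \ll \mathcal{H}_\alpha^s$.

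Next, suppose for contradiction that $\dim_{\mathcal{H}_\alpha}(S) < s$. By \cref{general hausdorff dimension} there exists then some exponent $s' < s$ with $\mathcal{H}_\alpha^{s'}(S) = 0$. I would then invoke the standard monotonicity of Hausdorff-type measures in the exponent, which carries over without change to the anisotropic setting because the gauge $\Psi^s(\mathcal{C}^\alpha_\delta) = \delta^s$ in \cref{d:hausdorff_meas} is a monomial in $\delta$: for any countable covering $\{\mathcal{C}^\alpha_{r_i}(x_i,t_i)\}_{i\in I}$ of $S$ with $r_i \leq \delta$, one has
$$\sum_{i\in I} r_i^s \;\leq\; \delta^{s-s'}\sum_{i\in I} r_i^{s'},$$
so sending the $s'$-sum to zero and then $\delta \to 0^+$ (in this order) yields $\mathcal{H}_\alpha^s(S)=0$ as well.

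Combining these two facts, the absolute continuity $\mu \ll \mathcal{H}_\alpha^s$ forces $\mu(S)=0$. But since $\mu$ is concentrated on $S$, this gives $\mu(\R^d\times \R) = \mu((\R^d\times \R)\cap S) = \mu(S) = 0$, contradicting the assumption that $\mu$ is non-trivial. Therefore $\dim_{\mathcal{H}_\alpha}(S)\geq s$, which is the desired conclusion. The argument is essentially a direct packaging of \cref{AC with respect ot general hausdorff} with a standard dimension-theoretic fact, so I do not anticipate any real obstacle; the only minor check is the exponent-monotonicity of $\mathcal{H}_\alpha^s$ in the anisotropic gauge, which is immediate from the Caratheodory construction used in \cref{d:hausdorff_meas}.
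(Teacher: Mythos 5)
Your proof is correct and follows essentially the same route as the paper's: apply \cref{AC with respect ot general hausdorff} to get absolute continuity, use non-triviality plus concentration to force $\mathcal{H}_\alpha^s(S)>0$, and conclude from \cref{general hausdorff dimension}. The only difference is cosmetic — you argue by contradiction and spell out the exponent-monotonicity of $\mathcal{H}_\alpha^{s}$, a step the paper leaves implicit.
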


\begin{proof}
Since $\mu$ is non-trivial and concentrated on  $S$, it holds $\mu(S)>0$. Thus, by  \cref{AC with respect ot general hausdorff} we infer that $\mathcal{H}_\alpha^s(S) >0$. Hence, by \cref{general hausdorff dimension}, we deduce that $\dim_{\mathcal{H}_\alpha}S\geq s$.
\end{proof}

To contextualize our results, we note that -- for the standard Hausdorff measure -- the statement of \cref{AC with respect ot general hausdorff} is part of the following classical result (see \cite{MAT}*{Theorem 8.8}). 
\begin{lemma}
[Frostman]\label{l:frostman}
Let $A\subset \R^d$ be a Borel set and $s>0$. Then, the following facts are equivalent:
\begin{itemize}
    \item[(a)] $\mathcal{H}^s(A)>0$;
    \item[(b)] there exists a positive Borel measure $\mu$ such that $\mu(A)>0$ and 
    $$
    \mu(B_\delta(x))\lesssim \delta^s, \quad \forall x\in \R^d, \delta>0.
    $$
\end{itemize}
\end{lemma}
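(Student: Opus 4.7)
The implication (b) $\Rightarrow$ (a) is a direct covering argument. For any cover $\{U_i\}$ of $A$ with $d_i := \operatorname{diam} U_i \leq \delta$, each $U_i$ lies in a ball $B_{d_i}(x_i)$ with $x_i \in U_i$, so the Frostman bound yields
\begin{equation*}
\mu(A) \leq \sum_i \mu(B_{d_i}(x_i)) \lesssim \sum_i d_i^s.
\end{equation*}
Taking the infimum over covers and letting $\delta \to 0$ gives $\mu(A) \lesssim \mathcal{H}^s(A)$, hence $\mu(A) > 0$ forces $\mathcal{H}^s(A) > 0$.

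For (a) $\Rightarrow$ (b) my plan is the classical dyadic mass-distribution construction. First I reduce to the case where $A$ is compact and contained in a single dyadic cube $Q_0$, invoking Borel regularity of $\mathcal{H}^s$. The elementary observation that any cover $\{U_i\}$ with $\sum d_i^s < \varepsilon$ automatically satisfies $d_i < \varepsilon^{1/s}$ shows that the Hausdorff content (covers by sets of unrestricted diameter) vanishes iff $\mathcal{H}^s(A) = 0$. So I may assume there exists $\eta > 0$ with $\sum d_i^s \geq \eta$ for every cover of $A$.

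I then construct candidate measures $\mu_N$ on $Q_0$ by a two-pass dyadic procedure. In the bottom-up pass, deposit mass $(\operatorname{side} Q)^s$ uniformly on every level-$N$ dyadic cube $Q$ that meets $A$. In the top-down pass, for $k = N-1, N-2, \ldots, 0$ and each level-$k$ dyadic cube $Q$, if the current mass $\mu_N(Q)$ exceeds the budget $(\operatorname{side} Q)^s$, rescale the restriction of $\mu_N$ to $Q$ by the ratio $(\operatorname{side} Q)^s/\mu_N(Q)$. The output then satisfies $\mu_N(Q) \leq (\operatorname{side} Q)^s$ on every dyadic cube, which upgrades to the Frostman ball bound $\mu_N(B_\delta(x)) \lesssim \delta^s$ after covering the ball with boundedly many dyadic cubes of side comparable to $\delta$.

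The heart of the argument is the uniform lower bound $\mu_N(Q_0) \gtrsim \eta$. This comes from the stopping-time structure of the top-down pass: for each $x \in A$ let $Q(x)$ be the maximal dyadic ancestor at which renormalization triggered; these maximal cubes $\{Q_j\}$ cover $A$, satisfy $\mu_N(Q_j) = (\operatorname{side} Q_j)^s$, and therefore $\mu_N(Q_0) \geq \sum_j (\operatorname{side} Q_j)^s \geq \eta$ by the content hypothesis. A weak-$*$ subsequential limit $\mu$ then retains the Frostman ball bound (preserved for open balls by lower semicontinuity and extended to all balls by outer regularity) and satisfies $\mu(A) \geq \eta$, since the supports of the $\mu_N$ shrink into the $\sqrt{d}\, 2^{-N}$-neighborhood of the compact set $A$. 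I expect the main obstacle to be this final localization step — ensuring that the limit actually charges $A$ rather than only a neighborhood — which requires the interplay of compactness of $A$, upper semicontinuity of $\mu$ on closed sets, and the content lower bound $\eta$.
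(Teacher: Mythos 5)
Your direction (b) $\Rightarrow$ (a) is precisely the paper's argument: it is the covering computation carried out (in the more general anisotropic setting) in \cref{AC with respect ot general hausdorff}, and the remark following \cref{l:frostman} states explicitly that this is the only direction the paper needs. For (a) $\Rightarrow$ (b) the paper gives no proof at all --- it cites \cite{MAT}*{Theorem 8.8} --- so there you are supplying the classical dyadic mass-distribution construction rather than reproducing anything in the text. Your outline of that construction is sound and is essentially Mattila's: the two-pass renormalization yielding $\mu_N(Q)\leq(\operatorname{side}Q)^s$ on all dyadic cubes, the observation that every point of $A$ lies in a saturated cube so that the maximal saturated cubes form a disjoint dyadic cover of $A$ with $\mu_N(Q_0)\geq\sum_j(\operatorname{side}Q_j)^s\gtrsim_d \eta$ by the content lower bound, and the localization of the weak-$*$ limit onto the compact set $A$ via shrinking neighborhoods and upper semicontinuity on compacta. (Two cosmetic points: the maximal ``saturated'' cube for a point at which renormalization never triggers is just its level-$N$ cube, which is saturated by construction; and the uniform bound on sub-level-$N$ scales requires $s\leq d$, which is harmless since both (a) and (b) fail for $s>d$.)

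The one step that deserves more than ``invoking Borel regularity of $\mathcal{H}^s$'' is the reduction to compact $A$. If $0<\mathcal{H}^s(A)<\infty$ this is fine: $\mathcal{H}^s$ restricted to $A$ is then a finite Borel measure on $\R^d$, hence Radon and inner regular by compact sets, so a compact $K\subset A$ with $\mathcal{H}^s(K)>0$ exists. But if $\mathcal{H}^s(A)=\infty$, extracting a compact subset of positive $\mathcal{H}^s$-measure (equivalently, of positive $s$-content) from a general Borel set is the Besicovitch--Davies ``subsets of finite measure'' theorem, i.e.\ the capacitability of the dyadic Hausdorff content --- a genuinely nontrivial input, not a consequence of outer regularity. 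You should either quote that result explicitly or restrict the implication you prove to sets with $\mathcal{H}^s(A)<\infty$; either option is more than sufficient for the paper, which only ever invokes the easy direction.
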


The full power of Frostman's result is not needed for our purposes. We remark that the non trivial part of \cref{l:frostman} is the implication $(a) \Rightarrow (b)$, while $(a) \Leftarrow (b)$ follows by the argument that we described in our more general anisotropic setting.

We mention also \cite{S05}*{Theorem 3.2}, which is closely related to our approach.

\begin{theorem}[Measure divergence vector fields]
\label{t:div_measure}
Let $\frac{d}{d-1}\leq r\leq \infty$ and $V\in L^r_{{\rm loc}}(\R^d)$ be a vector field such that $\div V$ is locally a signed bounded measure on $\R^d$. If $r <\infty$, it holds 
$$
\mathcal{H}^s(B)<\infty \Rightarrow |\div V|(B)=0, \quad \forall B \text{ bounded Borel set},
$$ 
where $s:= d-\frac{r}{r-1}$. If $r= \infty$ then $|\div V|\ll \mathcal{H}^{d-1}$, with $|\div V|\in \mathcal{M}_{{\rm loc}}(\R^d)$ the variation of the signed measure $\div V$. 
\end{theorem}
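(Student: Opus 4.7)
The plan is to identify $\div V$ as a continuous linear functional on a Sobolev space via integration by parts, and then combine this with the classical capacitary characterization of sets of finite $\mathcal{H}^s$-measure. For every test function $\varphi\in C_c^\infty(\R^d)$, integration by parts and H\"older give
\[
\Big|\int \varphi\, d(\div V)\Big|=\Big|\int V\cdot\nabla \varphi\,\rmd x\Big|\leq \|V\|_{L^r(\supp \varphi)}\|\nabla \varphi\|_{L^{r'}(\R^d)},
\]
exhibiting $\div V$ as a bounded linear functional on the homogeneous Sobolev space $\dot{W}^{1,r'}_0(\R^d)$, i.e. as an element of $\dot{W}^{-1,r}(\R^d)$.

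Next, I would invoke the classical Meyers--Adams--Hedberg theorem (cf.\ Adams--Hedberg, \emph{Function Spaces and Potential Theory}, Ch.~5): for $s=d-r'$, any Borel set $B\subset \R^d$ with $\mathcal{H}^s(B)<+\infty$ has vanishing $(1,r')$-Sobolev capacity. Equivalently, for every $\varepsilon>0$ there is $\varphi_\varepsilon\in C_c^\infty(\R^d)$ with $0\leq \varphi_\varepsilon\leq 1$, identically equal to $1$ on an open neighbourhood $U_\varepsilon\supset B$, and $\|\nabla \varphi_\varepsilon\|_{L^{r'}}<\varepsilon$. Testing the distributional bound above against such $\varphi_\varepsilon$ and letting $\varepsilon\to 0^+$ would yield vanishing of the signed measure $\div V$ on $B$ after a careful outer-regularity step: one selects the neighbourhoods $U_\varepsilon$ as level sets of the capacitary extremals so that $|\div V|$ does not accumulate on the transition layers $\supp \varphi_\varepsilon\setminus U_\varepsilon$.

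To upgrade from vanishing of the signed measure to vanishing of the total variation $|\div V|$, I would apply the previous argument to the Borel pieces $B\cap A_\pm$, where $\R^d=A_+\cup A_-$ is a Hahn decomposition for $\div V$. Since $\mathcal{H}^s(B\cap A_\pm)\leq \mathcal{H}^s(B)<+\infty$, the same reasoning propagates and gives $(\div V)^\pm(B)=\div V(B\cap A_\pm)=0$, hence $|\div V|(B)=0$. The general bounded Borel case then follows from inner/outer regularity of the Radon measure $|\div V|$. The case $r=\infty$ (where $s=d-1$ and only ordinary absolute continuity is claimed) is somewhat easier and can alternatively be handled directly via the Chen--Frid normal-trace theory, which furnishes $V\cdot\nu\in L^\infty(\partial B_\rho)$ for a.e.\ sphere $\partial B_\rho$ and permits a Vitali-type covering argument combined with the Hahn decomposition.

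The main obstacle I anticipate lies in the bridge from the distributional bound to the vanishing of the signed measure on $B$. The identity
\[
\int \varphi_\varepsilon\, d(\div V)=\div V(U_\varepsilon)+\int_{\supp \varphi_\varepsilon\setminus U_\varepsilon}\varphi_\varepsilon\, d(\div V)
\]
requires simultaneously controlling the residual boundary integral, and this forces a careful selection of the cutoff functions adapted to level sets on which $|\div V|$ is well-behaved. This bookkeeping with capacitary potentials, combined with the Hahn decomposition to recover total variation from signed integration, is the technical heart of the proof.
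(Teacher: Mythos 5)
Your capacitary route is genuinely different from what the paper does (the paper quotes Šilhavý for this theorem, proves a weaker Frostman-type ball estimate for positive divergences in its Remark, and proves a covering-based support version for signed distributions in \cref{s:signed_case}), and the ingredients you cite are correct: $\div V$ acts continuously against $\|\nabla\varphi\|_{L^{r'}}$, and finiteness of $\mathcal{H}^{d-r'}$ does imply vanishing $(1,r')$-capacity. However, there is a genuine gap exactly at the step you yourself flag, and neither of your proposed remedies closes it. The bound $\left|\int\varphi_\varepsilon\,\rmd(\div V)\right|\leq\|V\|_{L^r}\|\nabla\varphi_\varepsilon\|_{L^{r'}}$ controls the pairing of $\varphi_\varepsilon$ with the \emph{whole signed} measure, and $\div V$ may carry arbitrarily large mass of both signs on $\{0<\varphi_\varepsilon<1\}$ and even on $\{\varphi_\varepsilon=1\}\setminus B$; no choice of ``level sets of capacitary extremals'' avoids an arbitrary singular measure, so the assertion that $|\div V|$ does not accumulate on the transition layers is precisely what must be proved. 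The Hahn-decomposition step is a non sequitur: testing against a cutoff adapted to $B\cap A_+$ still pairs it with the full measure $\div V$, not with $(\div V)^+=\div V\llcorner A_+$; the sets $A_\pm$ are merely Borel, so the pairing cannot be localized to them, and $(\div V)^\pm$ are in general not divergences of $L^r$ fields, so your duality estimate does not apply to them separately. As written, your argument is complete only when $\div V\geq 0$, i.e.\ essentially the situation of the paper's Remark (where an even simpler cutoff suffices).

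The gap is repairable, but the measure-theoretic regularity has to enter \emph{before} the capacitary testing, not afterwards. By inner regularity of the positive measures $(\div V)^\pm$ applied to $B\cap A_\pm$, it suffices to prove $\div V(K)=0$ for every compact $K\subset B$ (compact subsets of $B\cap A_\pm$ are compact subsets of $B$ on which $\div V=\pm(\div V)^\pm$). Given such a $K$ and $\eta>0$, outer regularity of the locally finite measure $|\div V|$ provides a bounded open $U\supset K$ with $|\div V|(U\setminus K)<\eta$; since $K$ is compact and has zero $(1,r')$-capacity, you can build (multiplying the capacitary function by a fixed cutoff supported in $U$ and truncating, which is why the full $W^{1,r'}$ smallness, not just the gradient part, is needed) a function $\varphi_\varepsilon\in C^\infty_c(U)$ with $0\leq\varphi_\varepsilon\leq1$, $\varphi_\varepsilon\equiv1$ near $K$ and $\|\nabla\varphi_\varepsilon\|_{L^{r'}}<\varepsilon$. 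Then
\begin{equation*}
|\div V(K)|\leq\left|\int\varphi_\varepsilon\,\rmd(\div V)\right|+|\div V|(U\setminus K)\leq \|V\|_{L^r(U)}\,\varepsilon+\eta,
\end{equation*}
and letting $\varepsilon,\eta\to0$ gives $\div V(K)=0$, hence $|\div V|(B)=0$. With this repair (and a separate treatment of $r=\infty$, where $r'=1$ and one should either use the comparability of $W^{1,1}$-capacity with $\mathcal{H}^{d-1}$ via the boxing inequality, or the Chen--Frid normal-trace theory you mention), your approach becomes a valid alternative to the covering arguments used in the paper and in Šilhavý.
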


\begin{remark} A quick derivation of a slightly weaker result goes as follows. Assume that $\mu = \div{V}$ is a locally positive measure. Fix $x \in \R^d$ and let $\chi_\delta$ be a cut off function which localise $B_\delta (x)$, i.e. $\chi_\delta\in C^\infty_c(B_{2\delta}(x); [0,1])$ with $\chi_\delta\equiv 1$ on $B_\delta(x)$ and $|\nabla \chi_\delta|\lesssim \delta^{-1}$. Then, we estimate
\begin{equation*}
    \mu(B_\delta(x))\leq \int_{\R^d} \chi_\delta \,\rmd\mu = -\int_{\R^d} V\cdot \nabla \chi_\delta \, \rmd y\leq \|V\|_{L^r(B_{2\delta}(x))} \|\nabla \chi_\delta\|_{L^{r'}}\lesssim \omega_V(2\delta)\delta^{d\frac{r-1}{r}-1},
\end{equation*}
where we defined $\omega_V(\delta)=\sup_{x}\|V\|_{L^r(B_{\delta}(x))}$.
Then by \cref{l:frostman}, we infer that $\div V \ll \mathcal{H}^{\tilde s}$ with $\tilde s=d\frac{r-1}{r}-1=\frac{r-1}{r}s$, where $s$ is the exponent in \cref{t:div_measure}. In the limiting cases $r=\frac{d}{d-1}$ and $r=\infty$, this argument yields the same result as the \cref{t:div_measure}.
\end{remark}

One could directly apply the \cref{t:div_measure} to deduce results on general conservation laws of the form \eqref{conslaw} and \eqref{entropy_equality}, thereby proving \cref{t:main_general_laws}. However, since for general fluid equations the scaling is not isotropic in space and time, in the case of incompressible Euler (\cref{t:main_euler}) and incompressible Navier--Stokes (\cref{t:leray_hopf_intro}) we specialize the proof by considering the anisotropic Hausdorff measure as defined in \cref{d:hausdorff_meas}, yielding to sharper dimension bounds.  

Let us emphasize that cylinders asymptotic is a strictly stronger property with respect to the measure being absolutely continuous with respect to a reference Hausdorff. Indeed we have the following
\begin{proposition}
\label{p:counter_example} Let $B_1(0)\subset \R^d$ be the $d-$dimensional unit ball, $d\geq 2$. For all $\eps>0$ there exists a vector field $V_\eps\in L^{\frac{d}{d-1}}(B_1(0))$ such that $\div V\geq 0$ and $\div V_\eps\in L^1(B_1(0))$,  thus in particular $\div V_\eps\ll \mathcal{L}^d$,  but there exists a dimensional constant $c_d$ such that
\begin{equation}
\label{not_asymptotic}
\div V_\eps (B_\delta (0))= c_d \delta^\eps, \qquad \forall \delta<1.
\end{equation}
\end{proposition}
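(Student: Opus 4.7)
The plan is to write down an explicit radial counterexample of power-law type. Concretely, I would define, for each $\eps>0$,
\[
V^\eps(x) := |x|^{\eps - d}\, x, \qquad x\in B_1(0)\setminus\{0\}.
\]
This ansatz is essentially forced by the requirement \eqref{not_asymptotic}: a radial vector field with $\div V^\eps(B_\delta(0))=c_d\delta^\eps$ must, by the classical divergence theorem applied to $\partial B_\delta$, have normal flux proportional to $\delta^{\eps - d + 1}$, giving the profile above.

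My first step is a direct pointwise computation away from the origin. Using $\div(\phi(|x|)x)=d\phi(|x|)+|x|\phi'(|x|)$ with $\phi(r)=r^{\eps-d}$, this yields
\[
\div V^\eps(x) = \eps\, |x|^{\eps-d} \qquad \text{on } B_1(0)\setminus\{0\},
\]
which is nonnegative and, after polar coordinates, lies in $L^1(B_1(0))$ since $\int_0^1 r^{\eps-1}\,\rmd r = 1/\eps$. Similarly $V^\eps\in L^{d/(d-1)}(B_1(0))$: the integrand $|x|^{(\eps-d+1)d/(d-1)}$ produces, after polar coordinates, the radial exponent $d\eps/(d-1)-1$, which is $>-1$ for every $\eps>0$ and $d\geq 2$. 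Integrating $\eps|x|^{\eps-d}$ over $B_\delta(0)$ in polar coordinates gives exactly $\omega_{d-1}\delta^\eps$, so \eqref{not_asymptotic} holds with $c_d=\omega_{d-1}$.

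The only genuinely delicate step, and the one I would write with care, is promoting the pointwise identity to a distributional identity on all of $B_1(0)$, i.e.\ ruling out a hidden Dirac mass at the origin. For a test function $\varphi\in C_c^\infty(B_1(0))$, I would apply the divergence theorem on the annulus $A_r=\{r<|x|<1\}$ and let $r\to 0^+$. The boundary integral on $\partial B_r$ is controlled by
\[
\Bigl|\int_{\partial B_r}\varphi\, V^\eps\!\cdot\!\nu\, \rmd\mathcal{H}^{d-1}\Bigr|
\;\lesssim\; \|\varphi\|_\infty\cdot r^{\eps-d+1}\cdot r^{d-1} \;=\; \|\varphi\|_\infty\, r^{\eps} \;\longrightarrow\; 0,
\]
so no singular contribution survives and the distributional divergence equals the locally integrable function $\eps|x|^{\eps-d}$. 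This closes the argument: $\div V^\eps$ is a nonnegative $L^1$ density, hence absolutely continuous with respect to $\mathcal{L}^d$, yet its mass on $B_\delta(0)$ scales like $\delta^\eps$ rather than $\delta^d$, showing that the cylinder/ball asymptotic of \cref{AC with respect ot general hausdorff} is strictly stronger than plain absolute continuity with respect to the top-dimensional Hausdorff measure.
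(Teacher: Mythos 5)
Your proposal is correct and follows essentially the same route as the paper: the same vector field $V^\eps(x)=|x|^{\eps-d}x$, the same pointwise computation $\div V^\eps=\eps|x|^{\eps-d}$, the same excision argument near the origin (with the boundary flux of order $r^\eps$ vanishing, ruling out a Dirac mass), and the same polar-coordinate evaluation giving $c_d\delta^\eps$. The only addition is your explicit verification of the $L^{d/(d-1)}$ exponent, which the paper states without computation; everything checks out.
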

\begin{proof}
For $\eps>0$ set $V_\eps(x)=x |x|^{\eps-d}$. Clearly $V_\eps\in L^{\frac{d}{d-1}}(B_1(0))$ for all $\eps>0$.  To compute its distributional divergence we start by noticing that (outside the origin) we can perform the pointwise computation
\begin{equation}
\label{div_poitwise}
\div V_\eps(x)=\frac{\div x}{|x|^{d-\eps}}+x\cdot \nabla \left( \frac{1}{|x|^{d-\eps}}\right)=\frac{\eps}{|x|^{d-\eps}}, \qquad \forall x\neq 0.
\end{equation}
In fact, one can show that the previous formula gives the full distributional divergence in $B_1(0)$. Indeed, for any $\varphi \in C^\infty_c(B_1(0))$, we have
\begin{align}
\label{distr_div}
\left\langle \div V_\eps, \varphi \right\rangle &=-\int_{B_1(0)} V_\eps \cdot \nabla \varphi \, \rmd x = -\lim_{\kappa \rightarrow 0} \int_{B_1(0)\setminus B_\kappa (0)} V_\eps \cdot \nabla \varphi \, \rmd x \nonumber\\
&=-\lim_{\kappa \rightarrow 0} \int_{\partial B_\kappa (0)} \varphi V_\eps \cdot n  \, \rmd \mathcal{H}^{d-1}+\lim_{\kappa \rightarrow 0} \int_{B_1(0)\setminus B_\kappa (0)} \varphi  \div V_\eps  \, \rmd x,
\end{align}
where we denoted by $n:\partial B_\kappa (0)\rightarrow \mathbb{S}^{d-1}$ the inward unit normal. By \eqref{div_poitwise} we have 
$$
\lim_{\kappa \rightarrow 0} \int_{B_1(0)\setminus B_\kappa (0)}   \varphi  \div V_\eps \, \rmd x=\int_{B_1(0)} \frac{\eps}{|x|^{d-\eps}}  \varphi \, \rmd x,
$$
since $|x|^{\eps-d}\in L^1(B_1(0))$, and moreover
$$
\left| \lim_{\kappa \rightarrow 0} \int_{\partial B_\kappa (0)} \varphi V_\eps \cdot n  \, \rmd \mathcal{H}^{d-1}\right| \lesssim \lim_{\kappa \rightarrow 0} \kappa^\eps =0.
$$
Thus \eqref{distr_div} shows that the formula \eqref{div_poitwise} actually holds in $\mathcal{D}'(B_1(0))$. Finally,  denoting by $c_d=\mathcal{H}^{d-1}(B_1(0))$  the surface measure of the unit ball, we have
$$
\div V_\eps (B_\delta (0))=\int_{B_\delta(0))} \frac{\eps}{|x|^{d-\eps}}\, \rmd x=c_d \eps \int_0^\delta \frac{1}{r^{1-\eps}}\, \rmd r=c_d \delta^\eps.
$$
\end{proof}
The bound \eqref{not_asymptotic} shows that in general there are very regular measures (in terms of absolute continuity with respect to a reference measure) for which no local estimate on balls can hold. The (positive) measure constructed in the previous proposition is the divergence of a given vector field, which is indeed the relevant case in our analysis.

\subsection{Lower bounds for signed distributions}\label{s:signed_case} Here we give the details on how to prove dimension lower bounds on the support of nontrivial distributions $T$, thus not necessarily measures,  given by the divergence of a given vector field.  Of course in this case the lower bound does not follow from the local asymptotic on balls, and the corresponding absolute continuity with respect to $\mathcal H^s$.  The following proposition (and its proof) should be seen as an adaptation of \cref{t:div_measure}.
\begin{proposition}
Let $d\geq 2$,  $\frac{d}{d-1}\leq r\leq \infty$ and $V\in L^r_{\rm loc}(\R^d)$ be a $d-$dimensional vector field.  Denoting by $T:=\div V\in \mathcal{D}'(\R^d)$ and $s=d-\frac{r}{r-1}$, the following holds true.
\begin{itemize}
\item[(i)] If $r<\infty$ then $\mathcal{H}^s(\spt T)<\infty$ implies that $T\equiv 0$;
\item[(ii)] If $r=\infty$ then $\mathcal{H}^{d-1}(\spt T)=0$ implies that $T\equiv 0$.
\end{itemize}
In particular, if $T\not \equiv 0$ then necessarily $\dim_{\mathcal H} (\spt T)\geq s$.
\end{proposition}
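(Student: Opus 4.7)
The plan is to adapt the argument given immediately after Theorem~\ref{t:div_measure}, but working directly on the distribution $T$ rather than on a positive measure. To show $T\equiv 0$, it suffices to verify that $\langle T,\varphi\rangle = 0$ for every $\varphi\in C^\infty_c(\R^d)$. I would fix such a $\varphi$, set $K:=\spt\varphi\cap\spt T$ (a compact set), and introduce a family of cutoffs $\chi=\chi_\delta\in C^\infty_c(\R^d;[0,1])$ equal to $1$ on a neighborhood of $K$ and supported in a thin neighborhood; since $\varphi(1-\chi)$ vanishes near $\spt T$, one has $\langle T,\varphi\rangle=\langle T,\varphi\chi\rangle$, and using $T=\div V$ this equals
\[
-\int V\cdot\nabla\varphi\,\chi\,\rmd x-\int V\cdot\nabla\chi\,\varphi\,\rmd x,
\]
which I would show tends to $0$ as $\delta\to 0^+$.

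The cutoff is built from the Hausdorff hypothesis as follows. For every $\delta\in(0,1)$ one can cover $K$ by balls $\{B_{r_i}(x_i)\}$ with $r_i\leq\delta$ and with $\sum_i r_i^s\leq\mathcal{H}^s(K)+1$ in case (i), or with $\sum_i r_i^{d-1}\leq\delta$ in case (ii); by a Besicovitch/Vitali reduction one may assume the balls have overlap bounded by a dimensional constant $N_d$. Picking $\chi_i\in C^\infty_c(B_{2r_i}(x_i);[0,1])$ with $\chi_i\equiv 1$ on $B_{r_i}(x_i)$ and $|\nabla\chi_i|\lesssim r_i^{-1}$, I set $\chi:=\min\{1,\sum_i\chi_i\}$. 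Then $\chi$ is supported in $E_\delta:=\bigcup_i B_{2r_i}(x_i)$, it equals $1$ on a neighborhood of $K$, and $|\nabla\chi|\lesssim\sum_i r_i^{-1}\mathbf{1}_{B_{2r_i}(x_i)}$. Crucially, $\mathcal{L}^d(E_\delta)\lesssim\sum_i r_i^d\leq\delta^{d-s}\sum_i r_i^s\to 0$ as $\delta\to 0^+$, since $d-s=r'>0$.

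The first term is bounded by $\|\nabla\varphi\|_\infty\int_{E_\delta}|V|\,\rmd x$, which vanishes by absolute continuity of $\int|V|\,\rmd x$ (recall $V\in L^r_{\rm loc}\subset L^1_{\rm loc}$ since $d\geq 2$). For the second term, H\"older on each ball gives $r_i^{-1}\int_{B_{2r_i}(x_i)}|V|\,\rmd x\lesssim\|V\|_{L^r(B_{2r_i}(x_i))}\,r_i^{d/r'-1}$ in case (i), and the discrete H\"older inequality with exponents $(r,r')$ then yields
\[
\sum_i\|V\|_{L^r(B_{2r_i}(x_i))}\,r_i^{d/r'-1}\leq\Bigl(\sum_i\|V\|^r_{L^r(B_{2r_i}(x_i))}\Bigr)^{1/r}\Bigl(\sum_i r_i^s\Bigr)^{1/r'},
\]
exploiting the algebraic identity $r'(d/r'-1)=d-r'=s$. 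Bounded overlap gives $\sum_i\|V\|^r_{L^r(B_{2r_i}(x_i))}\leq N_d\int_{E_\delta}|V|^r\,\rmd x\to 0$, again by absolute continuity, while the Hausdorff factor stays bounded. In case (ii), the analogue is immediate from $\|V\|_\infty\sum_i r_i^{d-1}\leq\|V\|_\infty\,\delta\to 0$. This forces $\langle T,\varphi\rangle=0$, hence $T\equiv 0$. The ``in particular'' statement is then immediate, since $\dim_{\mathcal{H}}(\spt T)<s$ would force $\mathcal{H}^s(\spt T)=0$, contradicting (i)/(ii) if $T\not\equiv 0$. The main technical subtlety is ensuring the Besicovitch/bounded-overlap reduction does not spoil the sum $\sum_i r_i^s$; the algebraic coincidence $r'(d/r'-1)=s$ is what makes the H\"older pairing hit exactly the Hausdorff exponent and drives the sharpness of the result.
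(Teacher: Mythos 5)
Your overall skeleton is the same as the paper's: test $T$ against $\varphi\chi$ for a cutoff $\chi$ adapted to an efficient Hausdorff cover, kill the term $\int V\cdot\nabla\varphi\,\chi$ using that $\mathcal{L}^d(E_\delta)\lesssim \delta^{d-s}\sum_i r_i^s\to 0$ together with absolute continuity of the integral, and pair $\nabla\chi$ against $V$ by H\"older so that the exponent $r'\bigl(\tfrac{d}{r'}-1\bigr)=s$ appears. The one genuine difference is how you handle $\int V\cdot\nabla\chi\,\varphi$, and it is exactly where your argument has a gap. You estimate ball by ball and then apply discrete H\"older, which forces you to control $\sum_i\|V\|^r_{L^r(B_{2r_i})}$ by $N_d\int_{E_\delta}|V|^r$; this requires bounded overlap of the cover, and the Besicovitch covering theorem does \emph{not} apply off the shelf here, because the balls produced by the definition of $\mathcal{H}^s$ need not be centered at points of $K=\spt T\cap\spt\varphi$ (and the Vitali $5r$-lemma gives disjointness of the original balls, not bounded overlap of their dilates). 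This can be repaired: recenter each ball at a point $y_i\in K\cap B_{r_i}(x_i)$ and double its radius, apply Besicovitch to the resulting family centered on $K$, and observe that any original index can be reused by at most $N_d$ of the selected balls (all such balls contain $x_i$), so $\sum_j\rho_j^{\,s}\le 2^sN_d\sum_i r_i^s$. Without this (or an equivalent) justification the step is not valid. The paper sidesteps the issue entirely by taking $\chi=\max_i\chi_i$, for which $|\nabla\chi|^{r'}\le\sum_i|\nabla\chi_i|^{r'}$ pointwise a.e., so a single global H\"older gives $\|\nabla\chi\|_{L^{r'}}^{r'}\lesssim\sum_i r_i^{\,d-r'}=\sum_i r_i^s$ with no overlap hypothesis, the smallness coming from the factor $\|V\|_{L^r(\bigcup_iB_{r_i}\cap\spt\varphi)}$. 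Your case (ii) and the concluding dimension statement are fine as written; I would adopt the max-cutoff (or include the recentering argument) to close case (i).
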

\begin{proof}
We start by proving $(i)$. Fix $\eps>0$, $\varphi \in C^\infty_c(\R^d)$ and assume that $\mathcal H^s(\spt T)<\infty$. We will prove that 
\begin{equation}
\label{T_vanish}
| \langle T,\varphi \rangle |\leq\eps.
\end{equation}
For $\delta >0$ we can find a countable covering $\{ B_{r_i}\}_{i}$, $r_i<\delta$, of $\spt T$ such that 
\begin{equation*}
\sum_i r_i^s\leq 1+\mathcal H^s(\spt T).
\end{equation*}
In particular
\begin{equation}\label{Hd_small}
\mathcal{H}^d\left(\bigcup_i B_{r_i} \right)\leq \sum_i r_i^d\leq \delta^{d-s} \left(1+\mathcal H^s(\spt T) \right)
\end{equation}
can be made arbitrarily small if $\delta$ is chosen appropriately. Let $\tilde \eps>0$ be a positive constant which will be fixed at the end of the proof.  By \eqref{Hd_small} and the absolute continuity of the Lebesgue integral, if $\delta$ is small enough, we can guarantee
\begin{equation}
\label{norm_V_small}
\|V\|_{L^r\left(\bigcup_i B_{r_i} \cap \spt \varphi\right)}\leq \tilde \eps.
\end{equation}
In each ball consider $\chi_i\in C^\infty_c(B_{2r_i})$ such that $|\chi_i|\leq 1$,
\begin{equation}
\label{cutoff_bounds}
\chi_i\big|_{B_{r_i}}\equiv 1\quad \text{ and } \quad |\nabla \chi_i|\lesssim \frac{1}{r_i},
\end{equation}
and define $\chi:=\max_i \chi_i$. Clearly $\chi\in \Lip(\R^d)$,  and $\chi\equiv 1$ on $\spt T$, thus we can compute
$$
| \langle T,\varphi \rangle |=| \langle T,\chi\varphi \rangle |\leq\left| \int_{\R^d} V\cdot \nabla \varphi \chi\, \rm d x \right| + \left|\int_{\R^d}  V\cdot \nabla \chi \varphi\, \rm d x\right|=: I+II.
$$
By \eqref{Hd_small}, and since $|\chi|\leq 1$, we get  
$$
I\leq \|\nabla \varphi\|_{L^\infty} \|V\|_{L^r(\spt \varphi)} \|\chi\|_{L^\frac{r}{r-1}}\lesssim \left( \mathcal H^d\left(\bigcup_i B_{r_i}\right)\right)^\frac{r-1}{r}\lesssim \delta^{(d-s)\frac{r-1}{r}}.
$$
Moreover by   \eqref{norm_V_small} and \eqref{cutoff_bounds} we can estimate
\begin{align*}
II&\leq \|\varphi \|_{L^\infty} \|V\|_{L^r\left(\bigcup_i B_{r_i} \cap \spt \varphi\right)}\|\nabla \chi\|_{L^\frac{r}{r-1}}\lesssim \tilde \eps \left(\sum_i \int_{\R^d} |\nabla \chi_i|^\frac{r}{r-1}\,\rm d x \right)^\frac{r-1}{r}\\
&\lesssim \tilde \eps \left( \sum_i r_i^{d-\frac{r}{r-1}}\right)^\frac{r-1}{r}\lesssim \tilde \eps.
\end{align*}
Thus one can choose $\tilde \eps$, and then $\delta$, sufficiently small (depending on $\eps$) and conclude the validity of \eqref{T_vanish}.\\
\\
In case $(ii)$, i.e. when $r=\infty$, if $\mathcal H^{d-1}(\spt T)=0$, for $\delta>0$, we can find a countable covering of $\spt T$ such that $r_i<\delta$ and 
$$
\sum_i r_i^{d-1}\leq \tilde \eps.
$$
In this case we estimate
$$
I\leq \|\nabla \varphi \|_{L^\infty}\|V\|_{L^\infty(\spt \varphi)} \|  \chi\|_{L^1} \lesssim \sum_i r_i^d\lesssim \delta\tilde \eps
$$
and 
$$
II\leq \|\varphi\|_{L^\infty}\|V\|_{L^\infty(\spt \varphi)} \|  \nabla \chi\|_{L^1} \lesssim \sum_i r_i^{d-1}\lesssim \tilde \eps.
$$
As before, by choosing $\tilde \eps$ sufficiently small (here any $\delta<1$ is enough) we achieve \eqref{T_vanish}, which concludes the proof. 
\end{proof}

\section{Dissipation regularity for Euler}\label{s:e_proofs}

Here we give the more general version of \cref{t:main_euler}.

\begin{theorem}\label{t:euler_general}
Let $\Omega\subset \R^d$ be a domain. For  $\alpha>0$ and $3\leq r,q\leq \infty$ set
\begin{equation}\label{s:ugly_euler}
s = \min\left(d\frac{r-2}{r}-\alpha\frac{2}{q},d\frac{r-3}{r}-1+\alpha\frac{q-3}{q} \right).
\end{equation}
Assume $s\geq 0$ and that $u\in L_{{\rm loc}}^q(0,T;L^r_{{\rm loc}}(\Omega))$ and $p\in L^\frac{q}{2}_{{\rm loc}}(0,T;L^\frac{r}{2}_{{\rm loc}}(\Omega))$ is a weak solution of the incompressible Euler equations \eqref{E} having non-negative local anomalous dissipation measure $\varepsilon[u]\in \mathcal{M}_{{\rm loc}}(\Omega\times (0,T)) $. The following results hold true.
\begin{itemize}
    \item[(i)] If $q<\infty$, then for every Borel set $B\subset\joinrel\subset \Omega\times (0,T)$ we have that
    $$
    \mathcal{H}^s_\alpha(B)<\infty \, \Rightarrow \, \eps[u](B)=0,
    $$
    with the convention that $s=\min\left(d-\alpha\frac{2}{q},d-1+\alpha\frac{q-3}{q} \right)$ whenever $r=\infty$.
    \item[(ii)] If $q=\infty$, then
    $$
    \eps[u]\ll \mathcal{H}^s_\alpha,\quad \text{for} \quad s=\min\left(d\frac{r-2}{r},d\frac{r-3}{r}-1+\alpha \right),
    $$
    with the convention that $s=\min\left(d,d-1+\alpha \right)$ whenever $r=\infty$.
\end{itemize}
In particular, if $\eps[u]\not\equiv 0$ and concentrated on a space-time set $S$, it must hold
$$
\dim_{\mathcal{H}_\alpha}S \geq s.
$$
Further, if $\Omega$ is bounded and $(u,p)\in L^q(0,T; L^{r}(\Omega))\times L^{\frac{q}{2}}(0,T; L^{\frac{r}{2}}(\Omega))$ has a boundary extendable dissipation $\overline \eps[u]\in \mathcal{M}_{{\rm loc}}(\overline \Omega\times (0,T])$ in the sense of \cref{d:extendable_diss}, then the same conclusions $(i)$ and $(ii)$ above hold for $\overline \eps[u]$, where in $(i)$ every Borel set $B\subset\joinrel\subset \overline \Omega \times (0,T]$ is allowed.
\end{theorem}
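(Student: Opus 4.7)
The plan is to establish the single local cylinder estimate
\begin{equation}\label{prop:cyl_bound}
\eps[u](\mathcal{C}^\alpha_\delta(x_0,t_0)) \leq \omega(\delta)\,\delta^{s}
\end{equation}
for all $(x_0,t_0)$ in a fixed compact subset of $\Omega\times (0,T)$ (or of $\overline\Omega\times (0,T]$ in the boundary case), for every $\delta<\delta_0$, with $s$ exactly as in \eqref{s:ugly_euler} and $\omega:[0,\delta_0]\to [0,\infty)$ non-decreasing and bounded. Once \eqref{prop:cyl_bound} is in hand, \cref{AC with respect ot general hausdorff} and \cref{c:lowerbound_support} immediately package both conclusions $(i)$ and $(ii)$ together with the dimensional lower bound; the only thing to track is whether $\omega$ is in fact a modulus of continuity (giving the stronger null-set conclusion $(i)$) as opposed to merely bounded (giving only the absolute continuity $(ii)$).

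To produce \eqref{prop:cyl_bound}, fix $(x_0,t_0)$ and test the Duchon--Robert identity \eqref{DR_measure_inter} against the non-negative product cutoff $\varphi_\delta(x,t)=\chi_\delta(x-x_0)\psi_\delta(t-t_0)$, where $\chi_\delta\in C^\infty_c(B_{2\delta}(0);[0,1])$ satisfies $\chi_\delta\equiv 1$ on $B_\delta$ with $|\nabla\chi_\delta|\lesssim \delta^{-1}$, and $\psi_\delta\in C^\infty_c((-2\delta^\alpha,2\delta^\alpha);[0,1])$ satisfies $\psi_\delta\equiv 1$ on $(-\delta^\alpha,\delta^\alpha)$ with $|\partial_t\psi_\delta|\lesssim \delta^{-\alpha}$. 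Positivity of $\eps[u]$ and the bound $\varphi_\delta\geq \mathbbm{1}_{\mathcal{C}^\alpha_\delta(x_0,t_0)}$ give
$$
\eps[u](\mathcal{C}^\alpha_\delta(x_0,t_0)) \leq \left|\int \frac{|u|^2}{2}\partial_t\varphi_\delta\, \rmd x\, \rmd t\right| + \left|\int \left(\frac{|u|^2}{2}+p\right) u\cdot \nabla \varphi_\delta\, \rmd x\, \rmd t\right|.
$$

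Now I apply H\"older. The first integrand is estimated with paired exponents $(q/2,q/(q-2))$ in time and $(r/2,r/(r-2))$ in space, producing $\|u\|_{L^q_tL^r_x(\mathcal{C}^\alpha_{2\delta})}^2\cdot \delta^{-\alpha}\cdot \delta^{\alpha(q-2)/q}\cdot \delta^{d(r-2)/r}=\|u\|_{L^q_tL^r_x(\mathcal{C}^\alpha_{2\delta})}^2\,\delta^{s_1}$ with $s_1=d(r-2)/r-2\alpha/q$. For the second integrand, H\"older with paired exponents $(q/3,q/(q-3))$ and $(r/3,r/(r-3))$ handles the convective term $|u|^3|\nabla\varphi_\delta|$, and by the pressure hypothesis $p\in L^{q/2}_tL^{r/2}_x$ the same exponents also handle $pu\cdot\nabla\varphi_\delta$, yielding a combined bound $\bigl(\|u\|_{L^q_tL^r_x}^3+\|p\|_{L^{q/2}_tL^{r/2}_x}\|u\|_{L^q_tL^r_x}\bigr)\delta^{s_2}$ with $s_2=d(r-3)/r-1+\alpha(q-3)/q$. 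Setting $s=\min(s_1,s_2)$ as in \eqref{s:ugly_euler} and collecting all local norms (on $\mathcal{C}^\alpha_{2\delta}$) into a single bounded non-decreasing $\omega(\delta)$ yields \eqref{prop:cyl_bound}. The sub-cases $r=\infty$ and the exact formulas quoted in $(i)$--$(ii)$ follow by reading the H\"older exponents in the limit, replacing $L^r$ factors by $L^\infty$ and picking up the corresponding $\|\chi_\delta\|_{L^1}=|B_{2\delta}|\sim \delta^d$ volume factor.

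The dichotomy $(i)$ versus $(ii)$ emerges from the fate of $\omega$: when $q<\infty$, absolute continuity of the Lebesgue integral (applied uniformly over $(x_0,t_0)$ in a fixed compact) forces $\omega(\delta)\to 0$, so $\omega$ is a genuine modulus of continuity and the stronger null-set clause of \cref{AC with respect ot general hausdorff} gives $(i)$; when $q=\infty$ the norm $\|u\|_{L^\infty_tL^r_x(\mathcal{C}^\alpha_{2\delta})}$ does not in general vanish with $\delta$, so $\omega$ is only uniformly bounded and one recovers only the absolute continuity $(ii)$. For the boundary-extendable case I would repeat the same argument with $\varphi_\delta\in C^1_c(\overline\Omega\times (0,T])$: since \eqref{DR_measure_boundary} has already absorbed all integration by parts at $\partial\Omega\cup\{T\}$, no extra boundary contributions appear, and the H\"older estimates go through on (possibly truncated) cylinders intersected with $\overline\Omega\times (0,T]$. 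The main obstacle is essentially bookkeeping -- in particular, checking that the pressure integrability and the time-scale $\alpha$ always conspire to let both scalings $s_1$ and $s_2$ be realised simultaneously across the four regimes $q,r<\infty$, $q<\infty=r$, $q=\infty>r$, and $q=r=\infty$.
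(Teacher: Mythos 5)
Your proposal is correct and follows essentially the same route as the paper's own proof, which establishes exactly the cylinder estimate you describe (\cref{p_euler_balls}) using the same product cutoffs, the same H\"older exponent pairings $(q/2,q/(q-2))\times(r/2,r/(r-2))$ and $(q/3,q/(q-3))\times(r/3,r/(r-3))$ (with the triple $(r/2,r,r/(r-3))$ in space and $(q/(q-3),q/2,q)$ in time for the pressure term), and then concludes via \cref{AC with respect ot general hausdorff} and \cref{c:lowerbound_support}. The dichotomy between $(i)$ and $(ii)$ is resolved in the paper exactly as you say: $\omega$ is a genuine modulus of continuity by absolute continuity of the Lebesgue integral precisely when $q<\infty$, and merely bounded when $q=\infty$.
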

It is clear that in order to optimize the dimension bound $s$ in \cref{t:euler_general}, one has to choose $\alpha$ such that the two terms in the minimum in \eqref{s:ugly_euler} balance. This happens if and only if 
\begin{equation}
\alpha_{opt}=\frac{q}{q-1}\frac{r+d}{r},\label{alpha_opt}
\end{equation}
which in particular proves \cref{t:main_euler}.

Thanks to \cref{AC with respect ot general hausdorff} and \cref{c:lowerbound_support}, \cref{t:euler_general} is a direct consequence of the following proposition. We adopt the cylinders notation $\mathcal{C}^\alpha_{\delta}(x,t)$ from \cref{s:tools}. If $x\in \partial \Omega$ and/or $t=T$,  i.e. $(x,t)\in \overline \Omega\times (0,T]$, we assume that the cylinders are implicitly restricted to the space-time domain.

\begin{proposition}\label{p_euler_balls}
Let $\Omega\subset \R^d$ be a domain. For  $\alpha>0$ and $3\leq r,q\leq \infty$ define $s$ as in \eqref{s:ugly_euler}.
Assume $s\geq 0$ and that $u\in L_{{\rm loc}}^q(0,T;L^r_{{\rm loc}}(\Omega))$ and $p\in L^\frac{q}{2}_{{\rm loc}}(0,T;L^\frac{r}{2}_{{\rm loc}}(\Omega))$ is a weak solution of the incompressible Euler equations \eqref{E} having non-negative local anomalous dissipation measure $\varepsilon[u]\in \mathcal{M}_{{\rm loc}}(\Omega\times (0,T)) $. Let $K\subset\joinrel\subset \Omega\times (0,T)$ be a compact set. Fix $\delta_0>0$ such that
$$\mathcal{C}^\alpha_{\delta} (x,t) \subset\joinrel\subset  \Omega\times (0,T) \ \ \ \forall (x,t) \in K, \ \forall \delta \leq \delta_0. $$
Then, for any $(x,t)\in K$ the following results hold true.
\begin{itemize}
    \item[(i)] If $q<\infty$, then there exists a modulus of continuity $\omega:[0,\infty)\rightarrow [0,\infty)$ such that we have
    $$
    \eps [u](\mathcal{C}^\alpha_{\delta}(x,t))\leq \omega(\delta) \delta^s \quad \forall \delta \leq \frac{\delta_0}{4},
    $$
    with the convention that $s = \min\left(d-\alpha\frac{2}{q},d-1+\alpha\frac{q-3}{q} \right)$ whenever $r=\infty$.
    \item[(ii)] If $q=\infty$, then
    $$
    \eps [u](\mathcal{C}^\alpha_{\delta}(x,t))\lesssim \delta^s \quad   \forall \delta \leq \frac{\delta_0}{4}, \quad \text{where} \quad s=\min\left(d\frac{r-2}{r},d\frac{r-3}{r}-1+\alpha \right),
    $$
   with the convention that $s=\min\left(d,d-1+\alpha \right)$ whenever $r=\infty$.
\end{itemize}
Further, if $\Omega$ is bounded and $(u,p)\in L^q(0,T; L^{r}(\Omega))\times L^{\frac{q}{2}}(0,T; L^{\frac{r}{2}}(\Omega))$ has a boundary extendable dissipation $\overline \eps[u]\in \mathcal{M}_{{\rm loc}}(\overline \Omega\times (0,T])$ in the sense of \cref{d:extendable_diss}, then the same conclusions in $(i)$ and $(ii)$ above hold for $\overline \eps[u]$, for every $(x,t)\in \overline \Omega \times (0,T]$.
\end{proposition}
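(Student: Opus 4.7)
The plan is to render the Duchon--Robert identity \eqref{DR_measure_inter} quantitative by testing it against an anisotropic cutoff adapted to the cylinder $\mathcal{C}^\alpha_\delta(x,t)$, and then to estimate each resulting integral by anisotropic Hölder inequalities that treat the temporal scale $\delta^\alpha$ and the spatial scale $\delta$ separately. Fix $(x,t)\in K$ and $\delta\leq \delta_0/4$, and choose $\varphi_\delta \in C^1_c(\Omega\times(0,T))$ with $0\leq \varphi_\delta \leq 1$, $\varphi_\delta \equiv 1$ on $\mathcal{C}^\alpha_\delta(x,t)$, $\supp \varphi_\delta \subset \mathcal{C}^\alpha_{2\delta}(x,t)$, and
\begin{equation*}
\norm{\nabla \varphi_\delta}_\infty \lesssim \delta^{-1}, \qquad \norm{\partial_t \varphi_\delta}_\infty \lesssim \delta^{-\alpha}.
\end{equation*}
Testing \eqref{DR_measure_inter} and using non-negativity of $\eps[u]$ yields
\begin{equation*}
\eps[u]\bigl(\mathcal{C}^\alpha_\delta(x,t)\bigr) \leq \int_{\mathcal{C}^\alpha_{2\delta}(x,t)} \frac{|u|^2}{2}\partial_t \varphi_\delta \, \rmd x\, \rmd t + \int_{\mathcal{C}^\alpha_{2\delta}(x,t)} \left(\frac{|u|^2}{2}+p\right) u \cdot \nabla \varphi_\delta \, \rmd x\, \rmd t.
\end{equation*}
For the boundary-extendable case, I would replace $\eps[u]$ by $\overline\eps[u]$ and enlarge the cutoff class to $C^1_c(\overline{\Omega}\times (0,T])$, which still admits the same derivative bounds; the remainder of the argument then proceeds unchanged.

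Next I run three parallel anisotropic Hölder estimates. For the time-derivative term, pairing $|u|^2 \in L^{q/2}_t L^{r/2}_x$ with $\mathbf{1}_{\mathcal{C}^\alpha_{2\delta}} \in L^{q/(q-2)}_t L^{r/(r-2)}_x$ produces a volume factor $\delta^{d(r-2)/r + \alpha(q-2)/q}$; multiplying by $\norm{\partial_t\varphi_\delta}_\infty\lesssim \delta^{-\alpha}$ gives a bound by $\omega_u(\delta)^2 \delta^{s_1}$ with
\begin{equation*}
s_1 = d\frac{r-2}{r} - \alpha\frac{2}{q}, \qquad \omega_u(\delta):=\sup_{(x,t)\in K}\norm{u}_{L^q_t L^r_x(\mathcal{C}^\alpha_{2\delta}(x,t))}.
\end{equation*}
For the cubic transport $|u|^3$, Hölder in $L^{q/3}_t L^{r/3}_x$ against $L^{q/(q-3)}_t L^{r/(r-3)}_x$ produces the factor $\delta^{d(r-3)/r + \alpha(q-3)/q}$, which combined with $\norm{\nabla\varphi_\delta}_\infty\lesssim \delta^{-1}$ gives $\omega_u(\delta)^3\delta^{s_2}$ with $s_2 = d(r-3)/r + \alpha(q-3)/q - 1$. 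The pressure term is handled identically by first collapsing $pu\in L^{q/3}_t L^{r/3}_x$ via Hölder between $p\in L^{q/2}_t L^{r/2}_x$ and $u\in L^q_t L^r_x$, producing $\omega_p(\delta)\omega_u(\delta)\delta^{s_2}$ with the analogously defined $\omega_p(\delta)$. Summing gives the announced exponent $s=\min(s_1,s_2)$ of \eqref{s:ugly_euler}. The limiting cases $r=\infty$ and/or $q=\infty$ require only replacing the relevant Lebesgue ratios $(r-j)/r$ or $(q-j)/q$ by $1$.

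To finish, for conclusion (i) with $q<\infty$, I claim $\omega_u$ and $\omega_p$ form a common modulus of continuity on $[0,\delta_0/4]$: setting $K':=\{(x,t):\dist((x,t),K)\leq \delta_0/2\}$, which is compactly contained in $\Omega\times(0,T)$ by the choice of $\delta_0$, the dominated convergence theorem applied to $\|u(\cdot,s)\|_{L^r(B_{2\delta}(x))}^q$ (which tends pointwise to $0$ by absolute continuity of the spatial integral and is dominated by the integrable $\|u(\cdot,s)\|_{L^r(K')}^q$) gives $\omega_u(\delta)\to 0$, and the uniform control in $(x,t)\in K$ follows by the compactness of $K$. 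Hence $\eps[u](\mathcal{C}^\alpha_\delta(x,t))\leq \omega(\delta)\delta^s$ with $\omega$ a genuine modulus of continuity. For (ii), $q=\infty$, the quantities $\omega_u$ and $\omega_p$ are merely bounded independently of $\delta$, so the modulus is lost but the power $\delta^s$ persists. The principal technical obstacle is the careful bookkeeping of the two independent anisotropic Hölder pairings — an off-by-one in either ratio breaks the identification of $s_1$ and $s_2$ — and the verification that in the boundary-extendable setting the cutoffs can be built to respect the same anisotropic derivative bounds while merely needing to vanish on $\mathcal{C}^\alpha_{2\delta}(x,t)^c\cap(\overline\Omega\times (0,T])$, which is a straightforward extension/mollification construction.
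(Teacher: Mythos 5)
Your proposal is correct and follows essentially the same route as the paper's proof: test the Duchon--Robert identity against a product cutoff adapted to the anisotropic cylinder, apply mixed-norm H\"older estimates to the three terms to obtain the exponents $d\frac{r-2}{r}-\alpha\frac{2}{q}$ and $d\frac{r-3}{r}-1+\alpha\frac{q-3}{q}$, and obtain the modulus of continuity from absolute continuity of the Lebesgue integral when $q<\infty$. The only cosmetic difference is that you pull the sup-norm of the cutoff derivatives out and H\"older against the indicator of the cylinder, whereas the paper keeps $\eta_\delta'$ and $\nabla\chi_\delta$ inside the H\"older pairing; the exponents are identical.
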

\begin{proof}
We only give the full details in the case $\eps[u]\in \mathcal{M}_{{\rm loc}}(\Omega\times (0,T))$. Then, the reader can reconstruct the proof when $u$ has a boundary extendable dissipation $\overline \eps[u]\in \mathcal{M}_{{\rm loc}}(\overline \Omega\times (0,T])$ by simply allowing boundary points $(x,t)\in \overline \Omega \times (0,T]$. Indeed, the left hand side in the boundary extended energy balance \eqref{DR_measure_boundary} stays the same.

Fix $(x,t)\in K $ and let $\chi_\delta$ and $\eta_\delta$ be two cut off functions which localise $B_\delta (x)$ and $(t-\delta^\alpha,t+\delta^\alpha)$ respectively, i.e. $\chi_\delta\in C^\infty_c(B_{2\delta}(x); [0,1])$ with $\chi_\delta\equiv 1$ on $B_\delta(x)$ and $\eta_\delta\in C^\infty_c((t-(2\delta)^\alpha,t+(2\delta)^\alpha);[0,1])$ such that $\eta_\delta\equiv 1$ on $(t-\delta^\alpha,t+\delta^\alpha)$. Moreover, we choose the cut off functions such that $|\nabla \chi_\delta|\lesssim \delta^{-1}$ and $| \eta'_\delta|\lesssim \delta^{-\alpha}$. Here, $\delta$ is assumed to be small enough such that $\mathcal{C}^\alpha_{\delta}(x,t) \subset\joinrel\subset \Omega \times (0,T)$. Then, testing \eqref{DR_measure_inter} with $\chi_\delta\eta_\delta$, and since $\eps[u]\geq 0$, yields
$$
\eps[u](\mathcal{C}^\alpha_{\delta}(x,t)) \leq \int \frac{|u|^2}{2} \chi_\delta \eta'_\delta + \int \frac{|u|^2}{2} u\cdot \nabla \chi_\delta \eta_\delta + \int p u \cdot \nabla \chi_\delta \eta_\delta = I_{\delta}+II_{\delta} + III_{\delta}.
$$
We estimate separately the three terms. By using H\"older's inequality in space with exponents $\frac{r}{2}, \frac{r}{r-2}$ and in time with exponents $\frac{q}{2}, \frac{q}{q-2}$, we have that 
\begin{align}\label{est_I}
    |I_{\delta}| & \leq\int_0^T \int_{\Omega}|\eta_\delta'| |u|^2 |\chi_\delta |\, \rmd y\, \rmd \tau\leq \int_0^T |\eta_\delta'(\tau)| \|u(\tau)\|_{L^{r}(B_{2\delta}(x))}^2 \|\chi_\delta\|_{L^{\frac{r}{r-2}}}\, \rmd \tau\nonumber \\
    &\lesssim \delta^{d\frac{r-2}{r}}\int_0^T |\eta_\delta' (\tau)| \|u(\tau)\|^2_{L^{r}(B_{2\delta}(x))} \, \rmd \tau \leq \delta^{d\frac{r-2}{r}}\|u\|^2_{L^qL^r(\mathcal{C}^\alpha_{2\delta}(x,t))} \|\eta_\delta'\|_{L^{\frac{q}{q-2}}}\nonumber \\
    &\lesssim \|u\|^2_{L^qL^r(\mathcal{C}^\alpha_{2\delta}(x,t))}\delta^{d\frac{r-2}{r}-\alpha\frac{2}{q}}.
\end{align}
To estimate the second term, we use H\"older's inequality in space with exponents $\frac{r}{3}, \frac{r}{r-3}$ and in time with exponents $\frac{q}{3}, \frac{q}{q-3}$. Thus, we have
\begin{align}\label{est_II}
    |II_{\delta}| & \leq \int_0^T \int_{\Omega}|\eta_\delta| |u|^3 | \nabla \chi_\delta |\, \rmd y\, \rmd \tau\leq \int_0^T |\eta_\delta(\tau)| \|u(\tau)\|_{L^{r}(B_{2\delta}(x))}^3 \| \nabla \chi_\delta\|_{L^{\frac{r}{r-3}}}\, \rmd \tau\nonumber \\
    &\lesssim \delta^{d\frac{r-3}{r} - 1}\int_0^T |\eta_\delta (\tau)| \|u(\tau)\|^3_{L^{r}(B_{2\delta}(x))} \,\rmd \tau\leq \delta^{d\frac{r-3}{r} -1}\|u\|^3_{L^qL^r(\mathcal{C}^\alpha_{2\delta}(x,t))} \|\eta_\delta \|_{L^{\frac{q}{q-3}}}\nonumber \\
    &\lesssim \|u\|^3_{L^qL^r(\mathcal{C}^\alpha_{2\delta}(x,t))}\delta^{d\frac{r-3}{r} -1+\alpha\frac{q-3}{q}}.
\end{align}





To estimate the third term we use H\"older's inequality in space with exponents $\frac{r}{2}, r , \frac{r}{r-3}$ and in time with exponents $ \frac{q}{q-3}, \frac{q}{2}, q$. Thus, we obtain 
\begin{align}\label{est_III}
    |III_{\delta}| & \leq \int_0^T \int_{\Omega}|\eta_\delta| |p | |u| | \nabla \chi_\delta | \,\rmd y\,\rmd \tau\nonumber\\
    &\leq \int_0^T |\eta_\delta(\tau)| \| p(\tau)\|_{L^{\frac{r}{2}}(B_{2\delta}(x))} \|u(\tau)\|_{L^{r}(B_{2\delta}(x))}  \| \nabla \chi_\delta\|_{L^{\frac{r}{r-3}}}\, \rmd \tau\nonumber \\
    & \lesssim \delta^{d\frac{r-3}{r} - 1}\int_0^T |\eta_\delta (\tau)| \| p(\tau)\|_{L^{\frac{r}{2}}(B_{2\delta}(x))} \|u(\tau)\|_{L^{r}(B_{2\delta}(x))} \,\rmd \tau \nonumber
    \\ & \lesssim \delta^{d\frac{r-3}{r} -1}\|p\|_{L^\frac{q}{2} L^\frac{r}{2}(\mathcal{C}^\alpha_{2\delta}(x,t))} \|u\|_{L^q L^r(\mathcal{C}^\alpha_{2\delta}(x,t))} \|\eta_\delta \|_{L^{\frac{q}{q-3}}}\nonumber \\
    &\lesssim \|p\|_{L^\frac{q}{2} L^\frac{r}{2}(\mathcal{C}^\alpha_{2\delta}(x,t))}\|u\|_{L^q L^r(\mathcal{C}^\alpha_{2\delta}(x,t))} \delta^{d\frac{r-3}{r} -1+\alpha\frac{q-3}{q}}.
\end{align}
To resume, by \eqref{est_I}, \eqref{est_II} and \eqref{est_III}, we achieve
\begin{equation}\label{almost_final_est}
\eps[u](\mathcal{C}^\alpha_{\delta}(x,t))\lesssim \omega(\delta)\left( \delta^{d\frac{r-2}{r}-\alpha\frac{2}{q} } + \delta^{d\frac{r-3}{r}-1+\alpha\frac{q-3}{q}}\right),
\end{equation}
where we set
$$
\omega(\delta)=\sup_{(x,t)\in K}\left(\|u\|^2_{L^qL^r(\mathcal{C}^\alpha_{2\delta}(x,t))}+\|u\|^3_{L^qL^r(\mathcal{C}^\alpha_{2\delta}(x,t))}+\|p\|_{L^\frac{q}{2} L^\frac{r}{2}(\mathcal{C}^\alpha_{2\delta}(x,t))}\|u\|_{L^q L^r(\mathcal{C}^\alpha_{2\delta}(x,t))}\right).
$$
Hence, by choosing $s$ as in \eqref{s:ugly_euler} we conclude
\begin{equation*}
    \eps[u](\mathcal{C}^\alpha_{\delta}(x,t))\lesssim \omega(\delta)\delta^{s} \ \ \ (x,t)\in K, \ \delta \leq \frac{\delta_0}{4}.   
\end{equation*}
By absolute continuity of the Lebesgue integral it is clear that $\omega(\delta)$ gives a modulus of continuity as soon as $q<\infty$. Thus, the proof is concluded.
\end{proof}

\begin{remark}[Euler scaling invariant norms]
    \label{r:scaling_euler}
    The Euler equations are invariant under the scaling
    \begin{equation*}
        u_\lambda (x,t)=\lambda^{\alpha-1}u(\lambda x, \lambda^\alpha t) \quad \text{and} \quad  p_\lambda (x,t)=\lambda^{2(\alpha-1)}p(\lambda x, \lambda^\alpha t),
    \end{equation*}
    for all values of $\alpha$. By direct computation, we have 
    $$
    \| u_\lambda\|_{L^q_tL^r_x}=\lambda^{-\frac{d+r}{r}+\alpha\frac{q-1}{q}} \| u\|_{L^q_tL^r_x},
    $$
    giving a scaling invariant norm if and only if $\alpha$ is chosen as in \eqref{alpha_opt}. Thus, optimizing the dimension bound of the previous \cref{t:euler_general} in terms of $\alpha$ coincides with having the corresponding $L^q_tL^r_x$ norm scale invariant. 
\end{remark}

\subsection{Some numerology in relevant cases}\label{s:numerology_euler}
In this section, we report some explicit bounds given by \cref{t:main_euler} in cases that play a privileged role for turbulence. In what follows, we assume that $\Omega=\R^d$ or $\Omega=\T^d$ and $u$ is a weak solution to the Euler equations \eqref{E} such that $\e[u]$ is a non-negative measure. Hence, the hypothesis on the pressure follows by that on the velocity. In general bounded domains the same conclusions holds true, provided that we assume the right integrability on the pressure.

First, the case of $L^\infty(\Omega\times (0,T))$ solutions, giving $s=d$ as space-time Hausdorff dimension of the (non-trivial) dissipation, has been already discussed throughout the introduction. In addition, we remark that this result loosely complements that of Dubrulle  \& Gibbon \cites{DG22,Gibbon22}. The authors showed that $C(h)\geq1-3h$, where $C(h)$ is the codimension of the set having H\"{o}lder exponent $h$. In view of the $4/5$ths law, our result indicates that $C(h_*)\leq 1$, where $h_*$ is the H\"{o}lder exponent corresponding to $\zeta_3=1$ within the multifractal formalism. However, since we are directly dealing with the viscous dissipation measure, the relevant multifractal spectrum is instead the $f(\alpha)$ spectrum discussed by Meneveau and Sreenivasan \cites{Meneveau87,Meneveau88,Meneveau91}. We now discuss other cases.

\begin{itemize}
    
    \item[Case 1:] Assume that $u \in L^\infty(0,T;L^{r}(\Omega))$ for every $r \in [3, 3d/(d-1)]$.  
    Thus, in the conclusion of \cref{t:main_euler} we get 
    $$\alpha(d,r) = 1+\frac{d}{r}, \ \ \ s(d,r)=d-\frac{2d}{r}.$$
    In terms of dimension lower bounds the strongest estimate would be for $\alpha$ smaller and $s$ bigger: $\alpha$ close to $1$, i.e. $\alpha$ smallest, means that we look at the most isotropic measure, while $s$ largest means that we get a larger dimension bound. Hence, we wish to maximise $s$ and minimize $\alpha$, which is equivalent to choose $r_{opt} = \frac{3d}{d-1}$. 
    Then, we obtain 
    \begin{equation}
        \overline{\alpha}=\alpha(d,r_{opt}) = \frac{2+d}{3}, \ \ \ \overline{s} = s(d,r_{opt}) = \frac{2+d}{3}. \label{optimal_choice 2}
    \end{equation}
    In particular, if $\e[u]$ is non-trivial, we have the dimension lower bound 
    \begin{equation}
        \dim_{\mathcal{H}^{\overline{\alpha}}}\big(\spt{\e[u]}\big) \geq \overline{s}. \label{optimal_lower_bound 2}
    \end{equation}

    \item[Case 1bis: ] Assume that $u \in L^\infty(0,T; B^\frac13_{3,\infty}(\Omega))$. By Sobolev embedding, we only have that $u\in L^\infty(0,T;L^r(\Omega))$ for all $r\in [3,3d/(d-1))$, thus the limiting case $r=3d/(d-1)$ is not included. However, by a minor modification of the proof of \cref{p_euler_balls}, we obtain that the dimensional lower bound \eqref{optimal_lower_bound 2} is still valid, with $\overline{\alpha}, \overline{s}$ defined as in \eqref{optimal_choice 2}. Indeed, for every $r \in [3, 3d/(d-1))$, we choose $\alpha= \frac{2+d}{3}$ in place of \eqref{alpha_opt} and we modify \eqref{almost_final_est} accordingly. Thus, letting $\overline{\alpha}, \overline{s}$ be as in \eqref{optimal_choice 2}, we obtain that $\e[u] \ll \mathcal{H}_{\overline{\alpha}}^{\overline{s}-\gamma_r}$, for some $\gamma_r>0$ such that $\gamma_r \to 0$ as $r \to 3d/(d-1)$. If $\e[u]$ is non trivial, letting $r \to 3d/(d-1)$, we obtain the dimensional lower bound \eqref{optimal_lower_bound 2} (see \cref{general hausdorff dimension}).

    \item[Case 2:] Assume $u \in L^\infty(0,T;H^\beta(\Omega))$, with $H^\beta$ the usual $L^2-$based Sobolev space, $\beta>0$. Let us restrict to the case $\beta\in (0,5/6)$, since for $\beta\geq 5/6$ it is known that $\eps[u]\equiv 0$, see \cite{CCFS08}. We have the Sobolev embedding $H^\beta(\Omega)\subset L^{\frac{2d}{d-2\beta}}(\Omega)$ for all $\beta<d/2$ (and, in particular, for all $\beta<5/6$). To apply \cref{t:main_euler} we need $r\geq 3$, which in this case reads as $\beta\geq d/6$. Thus, in \cref{t:main_euler} we obtain 
    $$\alpha(d,\beta) = 1+\frac{d-2\beta}{2}, \ \ \ s(d,\beta)=s(\beta)=2\beta,$$
    yielding a non-trivial conclusion in the  range $\beta\in(d/6,5/6)$, if $d<5$, since $s$ was required to be non-negative. Curiously, the dimension bound in this case does not depend on the space dimension $d$, but instead only on the second--order structure exponent $\beta$.   
\end{itemize}

\section{Dissipation regularity for incompressible Navier--Stokes}\label{s:ns_proofs}

Here we give the more general version of \cref{t:leray_hopf_intro} by using the measure $\mathcal{H}^s_\alpha$, keeping $\alpha$ free.  To lighten the notation we simply write $u$ instead of $u^\nu$, since the viscosity parameter is fixed. 

\begin{theorem}\label{t:leray-hopf_general}
Let $\Omega\subset \R^d$ be a domain and let $u\in L^\infty(0,T;L^2(\Omega))\cap L^2(0,T;H^1(\Omega))$ be a Leray--Hopf weak solution to the incompressible Navier--Stokes equations \eqref{NS} having non-negative local dissipation measure $D[u]\in \mathcal{M}_{{\rm loc}}(\Omega\times (0,T)) $. For $\alpha>0$ and $3\leq r,q\leq \infty$ define
\begin{equation}\label{ugly_s}
s=\min\left(  d\frac{r-2}{r}-\alpha\frac{2}{q},-1+d\frac{r-3}{r}+\alpha\frac{q-3}{q},-2+d\frac{r-2}{r}+\alpha\frac{q-2}{q}\right).
\end{equation}
Assume that $s\geq0$, $u\in L_{{\rm loc}}^q(0,T;L_{{\rm loc}}^r(\Omega))$ and $p\in L_{{\rm loc}}^\frac{q}{2}(0,T;L_{{\rm loc}}^\frac{r}{2}(\Omega))$.
\begin{itemize}
    \item[(i)] If $q<\infty$, then for every Borel set $B\subset\joinrel\subset \Omega\times (0,T)$ we have
    $$
    \mathcal{H}^s_\alpha(B)<\infty \, \Rightarrow \, D[u](B)=0,
    $$
    with the convention that $s=\min\left( d-\alpha\frac{2}{q},d-1+\alpha\frac{q-3}{q},d-2+\alpha\frac{q-2}{q}\right)$ whenever $r=\infty$.
    \item[(ii)] If $q=\infty$, then
    \begin{equation}\label{ugly_s_2}
    D[u]\ll \mathcal{H}^s_\alpha\qquad \text{for} \quad s=\min\left(  d\frac{r-2}{r},-1+d\frac{r-3}{r}+\alpha,-2+d\frac{r-2}{r}+\alpha\right),
     \end{equation}
     with the convention that $s=\min\left( d,d-2+\alpha \right)$ whenever $r=\infty$.
\end{itemize}
In particular, if $D[u]\not\equiv 0$ and concentrated on a space-time set $S$, it must hold
$$
\dim_{\mathcal{H}_\alpha}S\geq s.
$$
\end{theorem}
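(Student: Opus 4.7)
The plan is to mirror the proof of \cref{p_euler_balls} and to conclude by means of \cref{AC with respect ot general hausdorff} together with \cref{c:lowerbound_support}. The only structural difference with the Euler case is the presence of the viscous term $\nu\Delta(|u|^2/2)$ in the local energy identity \eqref{nsebal}, which after localisation will produce exactly one additional boundary integral; as a bonus, the same argument also controls $\nu\int|\nabla u|^2$, yielding the Morrey-type estimate alluded to in the introduction.

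Fix $(x,t)$ in a compact set $K\subset\joinrel\subset\Omega\times(0,T)$ and $\delta>0$ small enough so that $\mathcal{C}^\alpha_\delta(x,t)\subset\joinrel\subset\Omega\times(0,T)$. I would choose the same nonnegative product cutoff $\phi=\chi_\delta(y)\eta_\delta(\tau)$ as in the proof of \cref{p_euler_balls}, additionally requiring the second order bound $|\Delta\chi_\delta|\lesssim \delta^{-2}$, which is compatible with standard bump function constructions. Testing the distributional identity \eqref{nsebal} against $\phi$, integrating by parts twice on the viscous term, and using the nonnegativity of both $D[u]$ and $\nu|\nabla u|^2$, one arrives at
$$D[u](\mathcal{C}^\alpha_\delta(x,t))+\nu\int_{\mathcal{C}^\alpha_\delta(x,t)}|\nabla u|^2\,\rmd y\,\rmd\tau\leq |I_\delta|+|II_\delta|+|III_\delta|+|IV_\delta|,$$
where $I_\delta, II_\delta, III_\delta$ are exactly the three integrals estimated in \eqref{est_I}--\eqref{est_III} of \cref{p_euler_balls}, and the new viscous contribution is
$$IV_\delta=\nu\int_0^T\!\!\int_\Omega \frac{|u|^2}{2}\,\Delta\chi_\delta\,\eta_\delta\,\rmd y\,\rmd\tau.$$

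The three Euler-type terms produce, verbatim, the first two exponents $d(r-2)/r-2\alpha/q$ and $d(r-3)/r-1+\alpha(q-3)/q$ that appear inside the minimum in \eqref{ugly_s}, with the expected $L^q_tL^r_x$ and $L^{q/2}_tL^{r/2}_x$ prefactors on $u$ and $p$. For the new term, I would apply H\"older's inequality in space with exponents $r/2,\,r/(r-2)$ and in time with exponents $q/2,\,q/(q-2)$, obtaining
$$|IV_\delta|\lesssim \nu\,\|u\|^2_{L^qL^r(\mathcal{C}^\alpha_{2\delta}(x,t))}\,\|\Delta\chi_\delta\|_{L^{r/(r-2)}}\,\|\eta_\delta\|_{L^{q/(q-2)}}\lesssim \nu\,\|u\|^2_{L^qL^r(\mathcal{C}^\alpha_{2\delta}(x,t))}\,\delta^{-2+d\frac{r-2}{r}+\alpha\frac{q-2}{q}},$$
which is precisely the third contribution to the minimum defining $s$ in \eqref{ugly_s}. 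Summing the four estimates and taking the smallest exponent gives $D[u](\mathcal{C}^\alpha_\delta(x,t))\leq \omega(\delta)\,\delta^s$ uniformly on $K$, where by absolute continuity of the Lebesgue integral the prefactor $\omega(\delta)$ is a modulus of continuity when $q<\infty$ and is merely locally bounded when $q=\infty$; this dichotomy matches the two conclusions $(i)$ and $(ii)$.

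Once this cylinder asymptotic is in hand, \cref{AC with respect ot general hausdorff} directly delivers the absolute continuity statements in $(i)$ and $(ii)$, and \cref{c:lowerbound_support} yields the claimed dimensional lower bound on any concentration set $S$. The main obstacle I foresee is purely arithmetic rather than analytic: optimizing $s$ in $\alpha$ requires balancing three competing exponents simultaneously, and the balance is achieved exactly at the parabolic scaling $\alpha=2$, which in turn recovers the formula $s=d+1-3(d/r+2/q)$ of \cref{t:leray_hopf_intro} and makes the threshold consistent with both the Prodi--Serrin class (where $s=1$) and the Caffarelli--Kohn--Nirenberg partial regularity bound in dimension three.
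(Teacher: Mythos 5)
Your proposal is correct and follows essentially the same route as the paper: test the local energy identity \eqref{nsebal} against the product cutoff $\chi_\delta\eta_\delta$, reuse the three Euler estimates \eqref{est_I}--\eqref{est_III} verbatim, bound the extra viscous term via H\"older with exponents $\tfrac{r}{2},\tfrac{r}{r-2}$ in space and $\tfrac{q}{2},\tfrac{q}{q-2}$ in time using $\abs{\Delta\chi_\delta}\lesssim\delta^{-2}$, and conclude from the cylinder asymptotics via \cref{AC with respect ot general hausdorff} and \cref{c:lowerbound_support}; this is exactly \cref{p:leray-hopf_balls}, including the Morrey-type bound \eqref{morrey-type_grad} on $\nu\abs{\nabla u}^2$. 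The only minor imprecision is in your closing aside: the three exponents do not all balance at $\alpha=2$ in general (the paper's \cref{r:scaling_NS} shows the first two balance at $\alpha_{opt}=\tfrac{q}{q-1}\tfrac{r+d}{r}$, and at $\alpha=2$ the first and third coincide while the second is the minimum precisely when $\tfrac{d}{r}+\tfrac{2}{q}\geq 1$), but this does not affect the proof of the theorem, which keeps $\alpha$ free.
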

Thus, \cref{t:leray_hopf_intro} directly follows by choosing $\alpha=2$, since $\mathcal{H}^s_2$ is the parabolic Hausdorff measure $\mathcal{P}^s$.  
Thanks to \cref{AC with respect ot general hausdorff} and \cref{c:lowerbound_support}, \cref{t:leray-hopf_general} is a direct consequence of the following proposition, where we adopt the cylinder notation $\mathcal{C}^\alpha_{\delta}(x,t)=B_\delta(x)\times (t-\delta^\alpha,t+\delta^\alpha)$ introduced in \cref{s:tools}. Note that for Leray--Hopf weak solutions $|\nabla u|^2\in L^1(\Omega\times (0,T))$. Thus, $|\nabla u|^2$ defines a positive measure absolutely continuous with respect to $\mathcal{L}^{d+1}$, with $\mathcal{L}^{d+1}$ the $d+1$ space-time Lebesgue measure.

\begin{proposition}\label{p:leray-hopf_balls}
Let $\Omega\subset \R^d$ be a domain and let $u\in L^\infty(0,T;L^2(\Omega))\cap L^2(0,T;H^1(\Omega))$ be a Leray--Hopf weak solution to the incompressible Navier--Stokes equations \eqref{NS} having non-negative local dissipation measure $D[u]\in \mathcal{M}_{{\rm loc}}(\Omega\times (0,T)) $. Let $K\subset\joinrel\subset \Omega\times (0,T)$ be a compact set.  Fix $\delta_0>0$ such that
$$\mathcal{C}^\alpha_{\delta}(x,t) \subset\joinrel\subset  \Omega\times (0,T) \ \ \ \forall (x,t) \in K, \ \forall \delta \leq \delta_0. $$
For all $\alpha>0$ and $3\leq r,q\leq \infty$ define $s$ as in \eqref{ugly_s}.
Assume that $s\geq0$, $u\in L_{{\rm loc}}^q(0,T;L_{{\rm loc}}^r(\Omega))$ and $p\in L_{{\rm loc}}^\frac{q}{2}(0,T;L_{{\rm loc}}^\frac{r}{2}(\Omega))$.  
Then, for every $(x,t)\in K$, and letting $\mu\in \mathcal{M}_{{\rm loc}}(\Omega\times (0,T))$ the positive measure $\mu=D[u]+\nu|\nabla u|^2$, the following results hold true.
\begin{itemize}
    \item[(i)] If $q<\infty$, then there exists a modulus of continuity $\omega:[0,\infty)\rightarrow [0,\infty)$ such that we have
    \begin{equation*}
  \mu\left(\mathcal{C}^\alpha_{\delta}(x,t)\right)\leq \omega(\delta) \delta^s \quad \forall \delta \leq \frac{\delta_0}{4},
    \end{equation*}
    with the convention that $s=\min\left( d-\alpha\frac{2}{q},d-1+\alpha\frac{q-3}{q},d-2+\alpha\frac{q-2}{q}\right)$ whenever $r=\infty$.
    \item[(ii)] If $q=\infty$, then for $s$ as in \eqref{ugly_s_2} it holds
    $$ \mu\left(\mathcal{C}^\alpha_{\delta}(x,t)\right)\lesssim \delta^s \quad   \forall \delta\leq\frac{\delta_0}{4},
    $$
    with the convention that $s=\min\left( d, d-2+\alpha\right)$ whenever $r=\infty$.
\end{itemize}
In particular, since $D[u]\geq 0$, we have
\begin{equation}\label{morrey-type_grad}
 \int_{\mathcal{C}^\alpha_{\delta}(x,t)}|\nabla u|^2\,dy\,d\tau\lesssim  \delta^s \quad \forall \delta \leq \frac{\delta_0}{4}.
    \end{equation}
\end{proposition}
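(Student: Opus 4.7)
The proof mirrors that of \cref{p_euler_balls}, with one additional term coming from the viscous Laplacian contribution in the Leray--Hopf local energy balance. Write the balance \eqref{nsebal} as
$$ \partial_t \tfrac{|u|^2}{2}+\div\!\left(\!\left(\tfrac{|u|^2}{2}+p\right)\! u \right) -\nu\Delta \tfrac{|u|^2}{2} = -\mu, $$
where $\mu = D[u]+\nu|\nabla u|^2 \in \mathcal{M}_{\rm loc}(\Omega\times(0,T))$ is non-negative by hypothesis. Fix $(x,t)\in K$ and the same space/time cutoffs $\chi_\delta,\eta_\delta$ as in the Euler proof, supported in $B_{2\delta}(x)$ and $(t-(2\delta)^\alpha,t+(2\delta)^\alpha)$, identically one on the inner cylinder, with $|\nabla\chi_\delta|\lesssim\delta^{-1}$, $|\Delta\chi_\delta|\lesssim\delta^{-2}$, $|\eta_\delta'|\lesssim\delta^{-\alpha}$. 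Testing the balance against $\varphi=\chi_\delta\eta_\delta\ge0$ and using $\mu\ge 0$ yields
$$ \mu\bigl(\mathcal{C}^\alpha_\delta(x,t)\bigr) \le I_\delta + II_\delta + III_\delta + IV_\delta, $$
where $I_\delta,II_\delta,III_\delta$ are formally the same integrals estimated in the proof of \cref{p_euler_balls} and $IV_\delta:=\nu\int\tfrac{|u|^2}{2}\,\Delta\chi_\delta\,\eta_\delta$ is the new viscous term.

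\textbf{Estimating the terms.} The three nonviscous terms are bounded exactly as in \eqref{est_I}, \eqref{est_II}, \eqref{est_III}, producing the scalings $\delta^{d(r-2)/r-\alpha\,2/q}$, $\delta^{d(r-3)/r-1+\alpha(q-3)/q}$ and $\delta^{d(r-3)/r-1+\alpha(q-3)/q}$ respectively, with prefactors of the local $L^q L^r$ norms of $u$ and $L^{q/2}L^{r/2}$ norm of $p$. For the new term I would apply H\"older in space with exponents $(r/2,r/(r-2))$ and in time with exponents $(q/2,q/(q-2))$, using $|\Delta\chi_\delta|\lesssim\delta^{-2}$ and $\|\Delta\chi_\delta\|_{L^{r/(r-2)}}\lesssim\delta^{-2+d(r-2)/r}$, $\|\eta_\delta\|_{L^{q/(q-2)}}\lesssim\delta^{\alpha(q-2)/q}$, to obtain
$$ |IV_\delta|\lesssim \nu\,\|u\|^2_{L^q L^r(\mathcal{C}^\alpha_{2\delta}(x,t))}\,\delta^{\,d(r-2)/r-2+\alpha(q-2)/q}. $$
This yields exactly the third exponent in the minimum \eqref{ugly_s}. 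Collecting the four contributions,
$$ \mu\bigl(\mathcal{C}^\alpha_\delta(x,t)\bigr)\lesssim \omega(\delta)\,\Bigl(\delta^{d(r-2)/r-2\alpha/q}+\delta^{d(r-3)/r-1+\alpha(q-3)/q}+\delta^{d(r-2)/r-2+\alpha(q-2)/q}\Bigr) $$
with $\omega(\delta)$ the supremum over $(x,t)\in K$ of a finite sum of $L^q L^r$ and $L^{q/2}L^{r/2}$ norms of $u,p$ on $\mathcal{C}^\alpha_{2\delta}(x,t)$ (and a $\nu\|u\|_{L^qL^r}^2$ term). Choosing $s$ as the minimum of the three exponents gives cases $(i)$ and $(ii)$; when $q<\infty$, absolute continuity of the integral ensures $\omega(\delta)\to 0$ as $\delta\to 0^+$, so $\omega$ is a modulus of continuity. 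The conventions for $r=\infty$ follow by sending $r\to\infty$ in each exponent.

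\textbf{Morrey-type bound.} Since $D[u]\ge0$ and $\nu|\nabla u|^2\,\mathcal{L}^{d+1}\le \mu$ as measures, restricting to $\mathcal{C}^\alpha_\delta(x,t)$ gives $\nu\int_{\mathcal{C}^\alpha_\delta(x,t)}|\nabla u|^2\,dy\,d\tau\le \mu(\mathcal{C}^\alpha_\delta(x,t))$, so \eqref{morrey-type_grad} is immediate from the bound on $\mu$.

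\textbf{Expected difficulty.} There is no substantial obstacle beyond the Euler case: the only novelty is the viscous term $IV_\delta$, which is routine once the correct pairing of H\"older exponents is selected. The main care is to record the three scaling exponents and verify that minimizing over them reproduces \eqref{ugly_s} and, after taking $r=\infty$ or $q=\infty$, the stated conventions; this is bookkeeping rather than a conceptual hurdle.
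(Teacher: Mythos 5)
Your proposal is correct and follows essentially the same route as the paper: test the local energy balance \eqref{nsebal} against $\chi_\delta\eta_\delta$, reuse the estimates \eqref{est_I}--\eqref{est_III} from the Euler case, and bound the extra viscous term via H\"older with exponents $(r/2,r/(r-2))$ in space and $(q/2,q/(q-2))$ in time to produce the third exponent in \eqref{ugly_s}. The treatment of the modulus of continuity for $q<\infty$ and the derivation of \eqref{morrey-type_grad} from $D[u]\ge 0$ also match the paper's argument.
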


\begin{proof}
Set $\mu= D^u + \nu\abs{\nabla u}^2$ and fix a point $(x,t) \in K$. For any $\delta>0$  chose two cutoff functions, one in space and one in time, $\chi_\delta$ and $\eta_\delta$ which localise  $B_\delta (x)$ and $(t-\delta^\alpha,t+\delta^\alpha)$ respectively, i.e. $\chi_\delta\in C^\infty_c(B_{2\delta}(x);[0,1])$ with $\chi_\delta\equiv 1$ on $B_\delta(x)$ and $\eta_\delta\in C^\infty_c((t-(2\delta)^\alpha,t+(2\delta)^\alpha);[0,1])$ such that $\eta_\delta\equiv 1$ on $(t-\delta^\alpha,t+\delta^\alpha)$. Moreover, choose the cut off functions such that $|\nabla \chi_\delta|\lesssim \delta^{-1}$ and $| \eta'_\delta|\lesssim \delta^{-\alpha}$. Since $\mu\geq 0$, by testing \eqref{nsebal} with $\chi_\delta\eta_\delta$ we get for the cylinder $\mathcal{C}^\alpha_{\delta}(x,t)$ the following estimate
\begin{align*}
\mu\left(\mathcal{C}^\alpha_{\delta}(x,t)\right) & \leq \int \frac{|u|^2}{2} \chi_\delta \eta'_\delta + \int \frac{|u|^2}{2}  u\cdot \nabla \chi_\delta \eta_\delta + \int p u \cdot \nabla \chi_{\delta} \eta_\delta  + \nu \int \frac{\abs{u}^2}{2} \eta_\delta \Delta\chi_\delta
 \\ & =I_{\delta}+II_{\delta} + III_{\delta} + IV_{\delta}.   
\end{align*}
We neglect multiplicative constants. Notice that in the proof of \cref{p_euler_balls} we have already estimated the terms $I_\delta, II_\delta,III_\delta$. Indeed, recalling \eqref{est_I}, \eqref{est_II} and \eqref{est_III}, we have
\begin{align*}
    |I_\delta|&\leq \int_{0}^T \int_{\Omega} \frac{\abs{u}^2}{2} \abs{\chi_\delta} \abs{\eta'_\delta}\, \rmd y \, \rmd \tau \lesssim \|u\|_{L^qL^r(\mathcal{C}^\alpha_{2\delta}(x,t))}^2 \delta^{d \frac{r-2}{r} -\alpha\frac{2}{q}},\\
   |II_\delta|&\leq \int_0^T \int_{\Omega} \frac{\abs{u}^3}{2} \abs{\nabla \chi_\delta} \abs{\eta_\delta}\, \rmd y \, \rmd \tau  \lesssim \| u \|_{L^qL^r(\mathcal{C}^\alpha_{2\delta}(x,t))}^3 \delta^{-1 + d \frac{r-3}{r}+\alpha\frac{q-3}{q}},\\
    |III_\delta|&\leq \int_0^T \int_{\Omega}|\eta_\delta| |p | |u| | \nabla \chi_\delta | \,\rmd y\,\rmd \tau  \lesssim \|p\|_{L^\frac{q}{2} L^\frac{r}{2}(\mathcal{C}^\alpha_{2\delta}(x,t))}\|u\|_{L^q L^r(\mathcal{C}^\alpha_{2\delta}(x,t))}\delta^{-1 + d \frac{r-3}{r}+\alpha\frac{q-3}{q}}.
\end{align*}
Hence, we are left with $IV_\delta$. Using H\"older's inequality in space with exponents $\frac{r}{2}, \frac{r}{r-2}$ and in time with exponents $\frac{q}{2}, \frac{q}{q-2}$, we infer that 
\begin{align*}
    \int_0^T \int_{\Omega} \frac{\abs{u}^2}{2} \abs{\Delta\chi_\delta} \abs{\eta_\delta}\, \rmd y \, \rmd \tau & \lesssim \int_0^T \abs{\eta_\delta(\tau)} \| u(\tau) \|_{L^r(B_{2\delta}(x))}^2 \| \Delta \chi_\delta \|_{L^\frac{r}{r-2}} \, \rmd \tau
    \\ & \lesssim \delta^{-2 + d\frac{r-2}{r}} \int_0^T\abs{\eta_\delta} \| u(\tau) \|_{L^r(B_{2\delta}(x))}^2 \, \rmd \tau
    \\ & \lesssim \delta^{-2 + d\frac{r-2}{r}} \| u\|_{L^qL^r(\mathcal{C}^\alpha_{2\delta}(x,t))}^2 \norm{\eta_\delta}_{L^\frac{q}{q-2}} 
    \\ & \lesssim \| u\|_{L^qL^r(\mathcal{C}^\alpha_{2\delta}(x,t))}^2 \delta^{-2 + d\frac{r-2}{r} +\alpha\frac{q-2}{q}}. 
\end{align*}
By putting all together we have shown that 
\begin{align*}
\mu\left(\mathcal{C}^\alpha_{\delta}(x,t)\right) \lesssim\omega(\delta)\delta^s,
\end{align*}
where we define 
$$
\omega(\delta)=\sup_{(x,t)\in K}\left(\| u\|_{L^qL^r(\mathcal{C}^\alpha_{2\delta}(x,t))}^2+\| u\|_{L^qL^r(\mathcal{C}^\alpha_{2\delta}(x,t))}^3+ \|p\|_{L^\frac{q}{2} L^\frac{r}{2}(\mathcal{C}^\alpha_{2\delta}(x,t))}\|u\|_{L^qL^r(\mathcal{C}^\alpha_{2\delta}(x,t))}\right),
$$
and $s$ is given by \eqref{ugly_s}.
By the absolute continuity of the Lebesgue integral it follows that $\omega(\delta)$ is a modulus of continuity as soon as $q<\infty$. This concludes the proof.
\end{proof}

\begin{remark} [Choices of $\alpha$ and scaling for Navier--Stokes]\label{r:scaling_NS}
    It is well known that the Navier--Stokes equations impose a diffusive scaling and Prodi--Serrin classes are the scaling invariant ones in this setting. However, \cref{t:leray-hopf_general} is empty in the Prodi--Serrin regime, since $D[u] \equiv 0$ by smoothness of solutions. 
 One can check that if $\alpha=2$ and $3 \leq q,r \leq +\infty$ satisfy the Prodi--Serrin condition $\frac{d}{r} + \frac{2}{q} = 1$, then the three terms in the minimum that defines $s$ in \eqref{ugly_s} equals $d-2$. In other words, in this setting, all the terms give exactly the same contribution. As already explained in the introduction, in the three-dimensional case this perfectly matches with the Caffarelli--Kohn--Nirenberg partial regularity \cite{CKN}, giving $D[u]\equiv 0$, consistent with the fact that such solutions are smooth.
Therefore, the interesting case is looking outside the Prodi--Serrin class.
Heuristically, looking at Leray--Hopf solutions outside the Prodi--Serrin regime we expect the contribution of the Laplacian to be negligible with respect to that of the Euler part. As already discussed in \cref{r:scaling_euler}, for any $1 \leq q,r \leq +\infty$ and for any $u \in L^q_t(L^r_x)$, letting $u_\lambda (x,t) = \lambda^{\alpha-1} u(\lambda x, \lambda^\alpha t)$, then $\|u_\lambda\|_{L^q_t L^r_x} = \| u \|_{L^q_t L^r_x}$ for any $\lambda$ if and only if $\alpha=\alpha_{opt}$ as defined in \eqref{alpha_opt}. One can also check that $\alpha_{opt} > 2$ if and only if $\frac{d}{r} + \frac{2}{q} > 1$ (i.e. outside Prodi--Serrin) and $\alpha_{opt} = 2$ if and only if $\frac{d}{r} + \frac{2}{q} = 1$. Thus, choosing $\alpha = \alpha_{opt}$ in \cref{t:leray-hopf_general} and assuming that $\frac{d}{r} + \frac{2}{q} \geq 1$, we discover that the first two terms in the minimum in \eqref{ugly_s} balance and the last term (the one coming from the Laplacian) is subordinate. This is consistent with the heuristic that the Laplacian term is lower order outside the regularity classes. More precisely, we have that 
    $$ d\frac{r-2}{r}-\alpha_{opt} \frac{2}{q} = -1+d\frac{r-3}{r}+\alpha_{opt}\frac{q-3}{q} = d \frac{r-2}{r} - \frac{2}{q-1} \frac{r+d}{r}, $$
    $$ -2 + d \frac{r-2}{r} + \alpha_{opt} \frac{q-2}{q} = -2 + d \frac{r-2}{r} + \frac{q-2}{q-1} \frac{r+d}{r} \geq d \frac{r-2}{r} - \frac{2}{q-1} \frac{r+d}{r}, $$
    being the latter inequality equivalent to $\frac{d}{r} + \frac{2}{q} \geq 1$. Thus, we find
    $$s = d \frac{r-2}{r} - \frac{2}{q-1} \frac{r+d}{r}. $$
    Hence, the choice of $\alpha = \alpha_{opt}$ appears natural in order to deal with Leray--Hopf weak solutions outside the Prodi--Serrin regime, while keeping the corresponding $L^q_tL^r_x$ norm scaling invariant. 
\end{remark}

\appendix
\section{Vanishing viscosity vs boundary dissipation}\label{app:vanish_visc}

We have the following result.

\begin{proposition} \label{p:boundary dissipation for vanishing visc}
Let $\Omega \subset \R^d$ be a bounded domain and let $\{(u^\nu, p^\nu)\}_{\nu>0}$ be a sequence of Leray--Hopf solutions to the incompressible Navier--Stokes system \eqref{NS} such that $u^\nu, p^\nu \in C^\infty(\overline \Omega \times (0,T])$ for every $\nu>0$. Assume that 
\begin{itemize}
    \item[(i)] $u^\nu \to u$ strongly in $L^3(\Omega\times (0,T))$ and $p^\nu \rightharpoonup p$ weakly in $L^\frac32 (\Omega\times (0,T))$;
    \item[(ii)] $\nu|\nabla u^\nu|^2\rightharpoonup \mu\in \mathcal{M}(\overline{\Omega}\times (0,T])$ and $|u^\nu(\cdot,T)|^2\rightharpoonup \lambda\in \mathcal{M}(\overline{\Omega})$ in the sense of measures.
\end{itemize}
Then, $(u,p)$ is a weak solution to the incompressible Euler system \eqref{E} with positive extendable boundary dissipation $\overline\e[u]$ according to \cref{d:extendable_diss}. Moreover, we have that
$$\overline\e[u] = \mu + \frac{\lambda}{2} \otimes \delta_T,$$
with $\delta_T\in \mathcal{M}([0,T])$ the Dirac delta in $t=T$. 
\end{proposition}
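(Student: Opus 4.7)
The plan is to obtain the defining identity of \cref{d:extendable_diss} by multiplying the exact pointwise local energy equality satisfied by the smooth approximants
\begin{equation*}
\partial_t \tfrac{|u^\nu|^2}{2} + \div\!\bigl((\tfrac{|u^\nu|^2}{2}+p^\nu)u^\nu\bigr) - \nu\Delta \tfrac{|u^\nu|^2}{2} = -\nu|\nabla u^\nu|^2
\end{equation*}
against an arbitrary $\varphi \in C^1_c(\overline\Omega \times (0,T])$, carefully integrating by parts, and then passing to the limit $\nu \to 0$ with the prescribed convergences.

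\textbf{Step 1: Integration by parts.} All boundary contributions on $\partial\Omega \times (0,T)$ vanish thanks to no-slip: the convective flux $(|u^\nu|^2/2 + p^\nu)u^\nu \cdot n$ is zero because $u^\nu|_{\partial\Omega} \equiv 0$; the viscous flux $\partial_n(|u^\nu|^2/2) = u^\nu \cdot \partial_n u^\nu$ also vanishes on $\partial\Omega$; and the dual boundary term $(|u^\nu|^2/2)\partial_n\varphi$ vanishes since $|u^\nu|^2 \equiv 0$ on $\partial\Omega$. At $t=0$ there is no contribution because $\varphi$ is supported away from that face, and at $t=T$ the time-derivative integration produces the single boundary integral $-\int_\Omega \tfrac{|u^\nu(x,T)|^2}{2}\varphi(x,T)\,\rmd x$. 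Rearranging yields
\begin{equation*}
\int_0^T\!\!\int_\Omega\!\Bigl[\tfrac{|u^\nu|^2}{2}\partial_t\varphi + \bigl(\tfrac{|u^\nu|^2}{2}+p^\nu\bigr)u^\nu\cdot\nabla\varphi + \nu\tfrac{|u^\nu|^2}{2}\Delta\varphi\Bigr]\rmd x\,\rmd t - \int_\Omega \tfrac{|u^\nu(x,T)|^2}{2}\varphi(x,T)\,\rmd x = \int_0^T\!\!\int_\Omega \nu|\nabla u^\nu|^2 \varphi\,\rmd x\,\rmd t.
\end{equation*}

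\textbf{Step 2: Passage to the limit.} Strong $L^3$ convergence of $u^\nu$ gives $|u^\nu|^2\to |u|^2$ in $L^{3/2}$ and $|u^\nu|^2 u^\nu \to |u|^2 u$ in $L^1$; combined with the weak $L^{3/2}$ convergence of $p^\nu$, the usual strong--weak product lemma produces $p^\nu u^\nu \rightharpoonup pu$ weakly in $L^1$. The Laplacian correction $\nu|u^\nu|^2/2$ vanishes strongly in $L^{3/2}$ since $|u^\nu|^2$ is bounded there, so $\nu\int (|u^\nu|^2/2)\Delta\varphi \to 0$. Hypothesis (ii) handles the remaining two terms: the terminal integral converges to $\tfrac12\int_{\overline\Omega}\varphi(\cdot,T)\,\rmd\lambda$ and the right-hand side to $\int \varphi\,\rmd\mu$. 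Collecting these limits produces exactly the identity \eqref{DR_measure_boundary} with $\overline\eps[u] = \mu + \tfrac{\lambda}{2}\otimes\delta_T$, which is non-negative as a sum of positive measures. That $(u,p)$ solves Euler in the weak distributional sense is obtained by the analogous (and easier) limit on the Navier--Stokes momentum equation tested by $\varphi\in C^1_c(\Omega\times(0,T))$: the viscous term $\nu\Delta u^\nu$ tends to zero in $\mathcal{D}'$ because $\sqrt{\nu}\,\nabla u^\nu$ is bounded in $L^2$, so $\nu\nabla u^\nu\to 0$ in $L^2$.

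\textbf{Main obstacle.} The delicate part is the spatial boundary bookkeeping, in particular showing that the viscous normal-flux term $\nu\int_{\partial\Omega}\partial_n(|u^\nu|^2/2)\varphi$ produced by Green's identity genuinely vanishes; this uses the pointwise identity $\partial_n(|u^\nu|^2/2) = u^\nu\cdot\partial_n u^\nu$ on $\partial\Omega$ together with no-slip, and is the reason the boundary hypothesis can only be enforced on $\partial\Omega\times(0,T)$ and not, for instance, at $t=T$ (where such cancellation has no reason to hold). The second point of care is the weak--strong convergence in the pressure--velocity product, which is essential to identify the limit flux $(\tfrac{|u|^2}{2}+p)u$ without any further assumption beyond those stated in (i).
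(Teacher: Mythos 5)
Your proposal is correct and follows essentially the same route as the paper: test the exact smooth local energy balance with $\varphi\in C^1_c(\overline\Omega\times(0,T])$, integrate by parts using no-slip to kill all spatial boundary terms (keeping only the terminal integral at $t=T$), and pass to the limit via strong $L^3$ convergence of $u^\nu$, the weak--strong product $p^\nu u^\nu\rightharpoonup pu$ in $L^1$, and hypothesis (ii) for the two measure-valued terms. The identification $\overline\eps[u]=\mu+\tfrac{\lambda}{2}\otimes\delta_T$ and its positivity are obtained exactly as in the paper.
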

We remark that the assumption $(ii)$ always holds true, possibly up to subsequences, for Leray--Hopf solutions as soon as the sequence of initial data $\{ u_0^\nu\}_{\nu>0}$ is bounded in $L^2(\Omega)$.
\begin{proof} 
Since we have that $u^\nu \to u$ strongly in $L^2(\Omega\times (0,T))$, it is easy to prove that $(u,p)$ is a weak solution to the incompressible Euler system \eqref{E} in $\Omega \times (0,T)$ with impermeability boundary condition $u\cdot n=0$ on $\partial \Omega\times (0,T)$. Moreover, by $(i)$ we obviously have $u\in L^3(\Omega\times (0,T))$ and $p\in L^\frac32 (\Omega\times (0,T))$ as required in \cref{d:extendable_diss}.

To prove that $u$ has boundary extendable dissipation in the sense of \cref{d:extendable_diss}, we notice that 
\begin{equation}
    \partial_t \frac{|u^\nu|^2}{2}+\div \left(\left(\frac{|u^\nu|^2}{2}+p^\nu \right) u^\nu \right) - \nu\Delta  \frac{\abs{u^\nu}^2}{2}  = - \nu\abs{\nabla u^\nu}^2 \ \ \ (x,t) \in \overline\Omega\times (0,T].
\end{equation}
since the solutions are smooth.
Thus, for any test function $\phi \in C^\infty_c((0,T] \times \overline{\Omega})$, and since $u^\nu=0$ on $\partial \Omega$, we have that 
\begin{align}
    \int_0^T \int_{\Omega} & \frac{|u^\nu|^2}{2} \partial_t \phi \, \rmd x\,\rmd t+ \int_0^T \int_{\Omega} \left[ \left(\frac{|u^\nu|^2}{2}+p^\nu \right) u^\nu \cdot \nabla \phi +\nu \frac{\abs{u^\nu}^2}{2} \Delta\phi \right] \, \rmd x\, \rmd t \nonumber
    \\ & = \int_0^T \int_\Omega  \nu\abs{\nabla u^\nu}^2 \phi \, \rmd x\, \rmd t+ \int_{\Omega} \frac{\abs{u^\nu(x,T)}^2}{2} \phi(x,T) \, \rmd x,\label{energy balance NS smooth}
\end{align}
after integrating by parts.
Letting $\nu\rightarrow 0^+$ in \eqref{energy balance NS smooth}, by the $L^3(\Omega\times (0,T))$ strong convergence of $u^\nu$, we get
\begin{align*}
    \int_0^T \int_{\Omega} \left[ \frac{|u^\nu|^2}{2} \partial_t \phi + \frac{|u^\nu|^2}{2} u^\nu \cdot \nabla \phi + \nu \frac{\abs{u^\nu}^2}{2} \Delta\phi \right] \, \rmd x\, \rmd t\to  \int_0^T \int_{\Omega} \left[\frac{|u|^2}{2} \partial_t \phi \, \rmd x\,\rmd t+ \frac{|u|^2}{2} u \cdot \nabla \phi \right] \, \rmd x\, \rmd t.
\end{align*}
We analyze the term with the pressure. By $(i)$ we have that $u^\nu p^\nu \rightharpoonup u p$ weakly in $L^1(\Omega\times (0,T))$. Thus, we get that
\begin{equation}
    \int_0^T \int_{\Omega}p^\nu u^\nu \cdot \nabla \phi \, \rmd x\, \rmd t\to \int_0^T \int_\Omega p u \cdot \nabla \phi \, \rmd x\, \rmd t. \nonumber
\end{equation}
Finally, by $(ii)$, the right hand side of \eqref{energy balance NS smooth} converges toward 
$$ \int_0^T \int_{\overline \Omega} \phi \, \rmd \mu (x,t) + \frac12\int_{\overline \Omega} \phi(x,T) \, \rmd \lambda(x) = \int_0^T\int_{\overline \Omega} \phi \, \rmd \overline\e[u] (x,t), $$
where we set 
$$\overline\e[u] = \mu + \frac{\lambda}{2} \otimes \delta_T. $$
Since $\mu$ and $\lambda$ are both positive measures, so is $\overline\e[u]$, thereby completing the proof.
\end{proof}

\begin{remark}
    \label{r:unif_bound} 
If we assume further the uniform bound
    \begin{equation*}\label{unif_bound_p_u}
   \sup_{\nu>0}\left( \|u^\nu\|_{L^q(0,T; L^{r}(\Omega))}+\|p^\nu\|_{L^{\frac{q}{2}}(0,T; L^{\frac{r}{2}}(\Omega))}\right)<\infty,
    \end{equation*}
    for some $q,r\geq 3$, then we have, in addition, $u\in L^q(0,T; L^{r}(\Omega))$ and $p\in L^{\frac{q}{2}}(0,T; L^{\frac{r}{2}}(\Omega))$. This assumption is used in \cref{t:euler_general} and \cref{p_euler_balls}.
\end{remark}

\section{External forces}\label{s:ext_forc}
Here we give the details on how to add external forces to our analysis. Since a force $f$ affects the entropy balance \eqref{entropy_equality} for general conservation laws in a way depending on the shape of the functions $\eta$ and $Q$, we only give the explicit computations for incompressible Euler and Navier--Stokes. 

We start with the following elementary observations. If a force $f$ appears on the right hand side of the momentum equations in \eqref{NS} and \eqref{E}, then the additional term $f\cdot u$ appears in the corresponding local energy balances \eqref{nsebal} and \eqref{limitmeasure}, respectively. In particular, we need $f\cdot u \in L^1_{{\rm loc}}(\Omega\times (0,T))$, so that $f \cdot u$ is a well defined distribution. Once this is satisfied, to run the proofs of \cref{p:leray-hopf_balls} and \cref{p_euler_balls}  (while keeping the same conclusions) we must have the right quantitative estimates of $f\cdot u$ on cylinders $\mathcal{C}^\alpha_{\delta}(x,t)$. Note that, keeping the notation of the cut off functions $\chi_\delta$ and $\eta_\delta$ from \cref{s:e_proofs}, by H\"older's inequality we get
\begin{equation}
    \label{f_bound_cylind}
    \int_0^T\int_{\Omega} f\cdot u \chi_\delta \eta_\delta \, \rmd y\, \rmd \tau\lesssim \|f\cdot u\|_{L^mL^l(\mathcal{C}^\alpha_{2\delta}(x,t))} \delta^{d\frac{l-1}{l}+\alpha\frac{m-1}{m}},
\end{equation}
whenever $f\cdot u\in L^m_{{\rm loc}}(0,T;L^l_{{\rm loc}}(\Omega))$, for some $m,l\geq 1$. 

\emph{Incompressible Euler:} 
Let $s$ be as in \cref{p_euler_balls}. Thus, as soon as $m$ and $l$ satisfy 
\begin{equation}
    \label{ml_cond_e}
    d\frac{l-1}{l}+\alpha \frac{m-1}{m}\geq s,
\end{equation}
the bound \eqref{f_bound_cylind} incorporates in the estimates from the proof of \cref{p_euler_balls}, whence the same conclusions follow.  Since $u\in L^q_{{\rm loc}}(0,T;L^r_{{\rm loc}}(\Omega))$ by assumption, the interested reader can find the minimal assumption to put on $f$ such that $f\cdot u\in L^m_{{\rm loc}}(0,T;L^l_{{\rm loc}}(\Omega))$, for some $m,l$ large enough so that \eqref{ml_cond_e} is satisfied. 

The argument for boundary extendable dissipation $\overline \eps [u]$ follows in the same way. It is enough to strengthen the local integrability condition with the global one $f\cdot u\in L^m(0,T;L^l(\Omega))$.

\emph{Incompressible Navier--Stokes:} 
Let $s$ be as in \cref{p:leray-hopf_balls}. Then the  conclusion follows if 
\begin{equation}
    \label{ml_cond_ns}
    d\frac{l-1}{l}+\alpha\frac{m-1}{m}\geq s.
\end{equation}
Also in this case, we leave to the interested reader the precise computations in order to get $f\cdot u\in L^m_{{\rm loc}}(0,T;L^l_{{\rm loc}}(\Omega)) $, for some $m,l$ enjoying \eqref{ml_cond_ns}.

\section{Pressure on bounded domains for Navier--Stokes}\label{s:pressure_leray}

Here we prove a local Calderon--Zygmund type estimate on the pressure for Leray--Hopf weak solutions to the three dimensional incompressible Navier--Stokes equation \eqref{NS} on a bounded domain $\Omega$. 
\begin{proposition}
    \label{p:pressure_bound_dom}
    Let $\Omega\subset \R^3$ be a bounded domain and let $u\in L^\infty(0,T;L^2(\Omega))\cap L^2(0,T;H^1(\Omega))$ be a Leray--Hopf weak solution of \eqref{NS}. Let $r\in (2,\infty)$, $q\in [2,\infty)$ and assume that $u\in L_{\rm loc}^q(0,T;L_{\rm loc}^r(\Omega))$.
    Then the unique zero average pressure enjoys $p\in L_{{\rm loc}}^\frac{q}{2}(0,T;L^\frac{r}{2}_{{\rm loc}}(\Omega))$.
\end{proposition}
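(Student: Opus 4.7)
Since the conclusion is local in both space and time, fix compact nested sets $K \subset\joinrel\subset K' \subset\joinrel\subset K'' \subset\joinrel\subset \Omega$, a time subinterval $[t_1, t_2] \subset\joinrel\subset (0,T)$, and a cutoff $\chi \in C^\infty_c(\Omega; [0,1])$ with $\chi \equiv 1$ on a neighbourhood of $K''$. Taking the distributional divergence of the momentum equation in \eqref{NS} and using $\div u = 0$ yields the pressure Poisson equation
\begin{equation*}
-\Delta p = \partial_i \partial_j(u_i u_j) \quad \text{in } \mathcal{D}'(\Omega).
\end{equation*}

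The plan is the classical decomposition $p = p_1 + h$ into a Calderon--Zygmund part and a harmonic correction. Define
\begin{equation*}
p_1(t,\cdot) := R_i R_j(\chi u_i\, \chi u_j)(t,\cdot),
\end{equation*}
where $R_i$ is the $i$-th Riesz transform on $\R^3$ and $\chi u_i$ is extended by zero outside $\supp \chi$, which is legitimate because of the cutoff. Since $r/2 \in (1,\infty)$, the Calderon--Zygmund boundedness of $R_iR_j$ on $L^{r/2}(\R^3)$ gives pointwise in $t$
\begin{equation*}
\|p_1(t,\cdot)\|_{L^{r/2}(\R^3)} \lesssim \|u(t,\cdot)\|_{L^r(\supp \chi)}^2,
\end{equation*}
whence $p_1 \in L^{q/2}(t_1,t_2; L^{r/2}(\R^3))$ with the correct bound. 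By construction $-\Delta p_1 = \partial_i\partial_j(\chi^2 u_i u_j)$ on $\R^3$, which agrees with $-\Delta p$ on the interior of $K''$ since there $\chi \equiv 1$. Hence $h := p - p_1$ is harmonic in $x$ on that interior, and interior regularity for harmonic functions gives
\begin{equation*}
\|h(t,\cdot)\|_{L^{r/2}(K)} \lesssim \|h(t,\cdot)\|_{L^1(K')} \lesssim \|p(t,\cdot)\|_{L^1(K')} + \|u(t,\cdot)\|_{L^r(\supp\chi)}^2,
\end{equation*}
where we used $r \geq 2$ and the boundedness of $K'$ to control the $p_1$ contribution in $L^1$ by its $L^{r/2}$ norm. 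Combining the decomposition and integrating in time produces the master estimate
\begin{equation*}
\|p\|_{L^{q/2}(t_1,t_2; L^{r/2}(K))} \lesssim \|u\|_{L^q(t_1,t_2; L^r(K''))}^2 + \|p\|_{L^{q/2}(t_1,t_2; L^1(K'))}.
\end{equation*}

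The main obstacle is closing this estimate, because the remainder still features $p$ with the same $L^{q/2}$ temporal integrability we are trying to establish. The standard a priori pressure bound for 3D Leray--Hopf weak solutions gives $p \in L^{5/3}((0,T)\times \Omega)$, which on bounded time intervals already controls $\|p\|_{L^{q/2}(L^1(K'))}$ whenever $q \leq 10/3$, and the proof is complete in that regime. For larger $q$ the strategy is to bootstrap: by interpolation between the Leray--Hopf regularity $u \in L^\infty(L^2) \cap L^2(H^1)$ and the hypothesis $u \in L^q_{\rm loc}(L^r_{\rm loc})$, one obtains $u$ in a scale of intermediate classes $L^{q_\theta}_{\rm loc}(L^{r_\theta}_{\rm loc})$; iterating the master estimate along this scale progressively upgrades the temporal integrability of $p$ until the target $L^{q/2}(L^1)$ control is reached. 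Equivalently, one can invoke Solonnikov's maximal $L^{q/2}L^{r/2}$-regularity for the Stokes system with divergence-form forcing $-\div(u \otimes u) \in L^{q/2}(L^{r/2})$, which produces the bound in one stroke on smooth bounded domains.
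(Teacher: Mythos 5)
Your localization is exactly the paper's: a Riesz-transform part built from the cut-off velocity plus a spatially harmonic correction controlled through the mean value property, leading to the same master estimate. The gap is in closing the remainder term $\|p\|_{L^{q/2}(t_1,t_2;L^1(K'))}$ when $q>10/3$. The bootstrap you propose cannot work as described: in the master estimate the harmonic correction is controlled \emph{only} by a norm of $p$ itself, so if the input is $p\in L^{a}_t L^1_{x,\mathrm{loc}}$ the output temporal exponent is $\min(a,q/2)$, capped by $a$; improved integrability of $u$ (whether assumed or obtained by interpolation with the Leray--Hopf classes) only improves the Calderon--Zygmund part and never raises this cap. Starting from the standard bound $p\in L^{5/3}$, every iteration returns the temporal exponent $\min(5/3,q/2)$ on a smaller set, so the scheme is stuck and only the regime $q\le 10/3$ is actually covered. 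The Solonnikov fallback also fails under the stated hypotheses: the assumption on $u$ is only local, so $\div(u\otimes u)$ does not lie globally in any $L^{q/2}_t$-based class (and with only the global Leray--Hopf information the convective term has temporal integrability at most $2$), hence global maximal regularity cannot produce the temporal exponent $q/2$; one would moreover need to identify the pressure it produces with the given zero-average pressure.

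What the paper does at precisely this point is to import a global-in-space a priori estimate for the pressure of Leray--Hopf solutions on bounded domains, namely $\|p\|_{L^{a}(\eps,T;L^{\rho}(\Omega))}\lesssim \|u\cdot\nabla u\|_{L^{a}(\eps,T;L^{\rho}(\Omega))}$ along the family $\frac{2}{a}+\frac{3}{\rho}=3$, $\rho>1$, $\eps>0$ (see \cite{RR}*{Section 5.4}); since $\Omega$ is bounded, this yields $p\in L^{a}(\eps,T;L^1(\Omega))$ for \emph{every} finite $a$, using only the Leray--Hopf regularity of $u$. This is the missing ingredient: it bounds the harmonic remainder in time to any finite exponent, and afterwards the paper's argument is essentially your local decomposition (extend $u$ restricted to an interior set by zero, solve $-\Delta \tilde p=\div\div(\tilde u\otimes\tilde u)$ in $\R^3$ by Calderon--Zygmund, and control the harmonic difference $p-\tilde p$ in $L^\infty_{x,\mathrm{loc}}$ by its $L^1(\Omega)$ norm). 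To repair your proof you must supply such an a priori bound giving arbitrarily high temporal integrability of $p$ in a weak spatial norm; it cannot be generated by iterating the interior estimate.
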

\begin{proof}
Recall the following estimate (global on $\Omega$) valid for three dimensional Leray--Hopf solutions 
    $$
\|p\|_{L^q(\eps,T;L^r(\Omega))}\lesssim \| u\cdot \nabla u\|_{L^q(\eps,T;L^r(\Omega))}<\infty,
$$
whenever $\frac{2}{q}+\frac{3}{r}=3$, $r>1$ and $\eps>0$ (see, for instance, Section 5.4 in \cite{RR}). This gives
\begin{equation}
    \label{press:est1}
    p\in L^q(\eps,T;L^1(\Omega)) \quad \forall q<\infty,
\end{equation}
since $\Omega$ is a bounded domain. Fix an open set $O\subset\joinrel\subset\Omega$ and an open interval $I\subset\joinrel\subset (\eps,T)$. Now, let $\tilde u$ be the trivial extension of $u\big|_{O}$ to the whole space $\R^3$ and let $\tilde p$ be the unique solution decaying at infinity of
 $$
 -\Delta \tilde p=\div \div (\tilde u\otimes \tilde u) \quad \text{in } \R^3.
 $$
By the standard Calderon--Zygmund estimates on the entire space we get 
\begin{equation}
    \label{pressure_est2}
    \|\tilde p\|_{ L^\frac{q}{2}(I;L^\frac{r}{2}(\R^3))}\lesssim \|\tilde u\|^2_{ L^{q}(I;L^{r}(\R^3))}=\|u\|^2_{ L^{q}(I;L^{r}(O))}.
\end{equation}
Furthermore, $p-\tilde p$ is harmonic in $O$, yielding to 
\begin{equation}
    \label{press_est3}
    \|(p-\tilde p)(t)\|_{L^\infty(\tilde \Omega)}\lesssim  \|(p-\tilde p)(t)\|_{L^1(\Omega)}, 
\end{equation}
for all $\tilde \Omega\subset\joinrel\subset O $, where the implicit constant might depend on $\tilde \Omega$. Thus, writing $p=p-\tilde p +\tilde p$ and using \eqref{press_est3}, for any $\tilde \Omega\subset\joinrel\subset O$ we estimate
\begin{align*}
\|p\|_{L^\frac{q}{2}(I;L^\frac{r}{2}(\tilde \Omega))}&\lesssim \|p-\tilde p\|_{L^\frac{q}{2}(I;L^1(\Omega))}+\|\tilde p\|_{L^\frac{q}{2}(I;L^\frac{r}{2}(\R^3))}\\
&\lesssim \| p\|_{L^\frac{q}{2}(\eps,T;L^1(\Omega))}+\|\tilde p\|_{L^\frac{q}{2}(I;L^\frac{r}{2}(\R^3))}.
\end{align*}
Then, by \eqref{press:est1} and \eqref{pressure_est2}, we infer that $p\in L_{{\rm loc}}^\frac{q}{2}(0,T;L^\frac{r}{2}_{{\rm loc}}(\Omega))$, since $O, \tilde \Omega\subset\joinrel\subset \Omega$ and $I\subset\joinrel\subset(\eps,T)$ are arbitrary (as well as
$\eps>0$). This concludes the proof.
\end{proof}

 \subsection*{Acknowledgments} LDR and MI have been partially funded by the SNF grant FLUTURA: Fluids, Turbulence, Advection No. 212573. The research of TDD was partially supported by the NSF
DMS-2106233 grant and  NSF CAREER award \#2235395. The authors would like to thank Massimo Sorella for pointing out a mistake in an early version of this manuscript.

\subsection*{Data Availability \& Conflict of Interest Statements} Data sharing not applicable to this article as no datasets were generated or analysed during the current study.  On behalf of all authors, the corresponding author states that there is no conflict of interest.


\bibliographystyle{plain}
\bibliography{biblio}

\end{document}